\theoremstyle{plain}
\newtheorem{theorem}{Theorem}[section]
\newtheorem{cor}[theorem]{Corollary}
\newtheorem{prop}[theorem]{Proposition}
\newtheorem{lemma}[theorem]{Lemma}
\newtheorem{hyp}{Property}
\newtheorem*{thA}{Theorem A}
\newtheorem*{thB}{Theorem B}
\newcounter{proofcount}
\newtheorem*{claim*}{Claim}
\newenvironment{claimproof*}[1][Proof of Claim.] 
{%
	\proof[#1]%
	
}
{%
	\endproof%
}
\theoremstyle{definition}
\newtheorem{remark}[theorem]{Remark}
\newtheorem{definition}[theorem]{Definition}
\newtheorem*{question}{Question}
\newcommand{\nc}{\newcommand}
\nc{\Z}{\mathbb{Z}}
\nc{\N}{\mathbb{N}}
\nc{\Q}{\mathbb{Q}}
\nc\LL{\mathcal L}
\nc\UU{\mathbb U}
\nc\M{\mathcal M}
\newcommand{\fix}{{\rm Fix}}
\def\restr #1{\!\mathop{_{\upharpoonright #1}}}
\nc{\acl}{\operatorname{acl}}
\nc{\dcl}{\operatorname{dcl}}
\nc{\aclq}{\operatorname{acl^{eq}}}
\nc\SU{\operatorname{SU}}
\nc\RM{\operatorname{RM}}
\nc{\paraset}{Z}
\nc{\scl}[1]{\langle #1 \xspace \rangle}
\nc{\cl}{\operatorname{cl}}
\nc\inv{ ^{-1}}
\nc\clS{\mathcal{S}}
\nc\Aut{\operatorname{Aut}}
\nc{\tp}{\operatorname{tp}}
\nc{\qftp}{\operatorname{qftp}}
\nc{\cf}{\text{cf.}\xspace}
\def\Ind#1#2{#1\setbox0=\hbox{$#1x$}\kern\wd0\hbox to 0pt{\hss$#1\mid$\hss}
	\lower.9\ht0\hbox to 0pt{\hss$#1\smile$\hss}\kern\wd0}
\def\Notind#1#2{#1\setbox0=\hbox{$#1x$}\kern\wd0\hbox to
	0pt{\mathchardef\nn="0236\hss$#1\nn$\kern1.4\wd0\hss}\hbox
	to 0pt{\hss$#1\mid$\hss}\lower.9\ht0
	\hbox to 0pt{\hss$#1\smile$\hss}\kern\wd0}
\def\ind{\mathop{\mathpalette\Ind{}}}
\def\nind{\mathop{\mathpalette\Notind{}}}
\def\ld{\mathop{\ \ \hbox to 0pt{\hss$\mid^{\hbox to
				0pt{$\scriptstyle\mathrm{ld}$\hss}}$\hss}
		\lower4pt\hbox to 0pt{\hss$\smile$\hss}\ \ }}
\def\indi#1{\mathop{\ \
		\hbox to 0pt{\hss$\mid^{\hbox to 0pt{$\scriptstyle#1$\hss}}$\hss}
		\lower4pt\hbox to 0pt{\hss$\smile$\hss}\ \ }}
\def\nindi#1{\mathop{\ \ \hbox to
		0pt{\hss$\!\not{\mid}^{\hbox to 0pt{$\scriptstyle\,#1$\hss}}$\hss}
		\lower4pt\hbox to 0pt{\hss$\smile$\hss}\ \ }}
\begin{document}

\title[Automorphism group of fields with operators]{Simplicity of the
automorphism group of fields with operators}
\date{\today}

\author[BLOSSIER ET AL.]{Thomas Blossier, Zo\'e Chatzidakis, Charlotte
Hardouin and Amador Martin-Pizarro}

\address{Universit\'e de Lyon; CNRS; Universit\'e Lyon 1; Institut Camille
        Jordan UMR5208, 43 boulevard du 11
        novembre 1918, F--69622 Villeurbanne Cedex, France}
\email{blossier@math.univ-lyon1.fr}
\address{IMJ-PRG, Universit\'e Paris Cit\'e, 8 place Aur\'elie Nemours,
 75013 Paris, France}
\email{zoe.chatzidakis@imj-prg.fr}
	\address{
	Institut de Math\'ematiques de Toulouse UMR5219; Institut Universitaire de France;  Universit\'e
	Paul Sabatier, 118 route de Narbonne,  \newline F--31062 Toulouse Cedex 9,
	France}
\email{charlotte.hardouin@math.univ-toulouse.fr}
\address{Mathematisches Institut,
        Albert-Ludwigs-Universit\"at Freiburg, D-79104 Freiburg, Germany}
\email{pizarro@math.uni-freiburg.de}
\thanks{The first three authors thank the  support of the program EFI
GDR-2052 as well as the ANR project AGRUME ANR-17-CE40-0026.  The first two and the fourth authors were partially
        supported by the program GeoMod ANR-19-CE40-0022-01 (ANR-DFG). The third
author was partially supported by the project ANR De rerum
        natura ANR-19-CE40-0018. The fourth author was partially supported by
        the program
        PID2020-116773GB-I00.}
\keywords{Model Theory, Differentially and difference
        closed fields, Automorphism groups}
\subjclass{03C45, 12H05, 12H10}

\begin{abstract}
        We adapt a proof of Lascar in order to show the simplicity of the group
        of automorphisms fixing pointwise all non-generic elements for a class
        of uncountable models of suitable theories, encompassing both 
        strongly minimal theories as well as several theories of fields 
        with operators.
\end{abstract}

\maketitle

\begin{flushright}
\begin{minipage}{.40\textwidth}
\small
\emph{
$\dots$A lot of things unsaid as well.\\ 
We shout and argue and fight, \\ and work it on out.\\ 
The sum is greater than the parts.}\\ 
(Pink Floyd -- \emph{Things left unsaid})\\
\hfill
To Zoé, in Memoriam.  
\end{minipage}
\end{flushright}

\bigskip

\section{Introduction}
The study of the lattice of normal subgroups of the automorphism 
groups of classical structures is a recurrent topic in mathematics. 
Schreier and Ulam \cite{SU33} classified all normal subgroups of the 
permutation group $\mathrm{Sym}(\N)$ of the integers.  (The 
anonymous referee of a previous version of this article let us know 
that their result was already implicitly contained in Onofri's work 
\cite{lO29}). In order to classify all normal subgroups of the permutation group of uncountable sets,  Baer \cite[Satz]{rB34} shows that the group $\mathrm{Sym}(\kappa)$ is simple modulo the normal subgroup of those automorphisms with support strictly smaller than $\kappa$.  Rosenberg obtains a similar classification for the general linear group of a 
vector space of dimension $\kappa$ over a fixed field and shows, among others, that this group is again simple, modulo the normal subgroup of those isomorphisms which differs from a homothecy by a linear transformation of range strictly less than $\kappa$ 
\cite[Proposition 3.6 \& Lemma 3.7]{aR58} (see also \cite{HWZ22})

Infinite sets, infinite vector (or affine) spaces over a ground  field as 
well as  algebraically closed fields of some fixed characteristic 
constitute the three archetypes of strongly minimal sets.  In 
\cite{dL92} Lascar 
studied the structure of the automorphism group of a countable (almost) strongly minimal set $\mathbb M$ of infinite dimension, more 
precisely, the normal subgroup $\Aut(\mathbb 
M/\acl^{eq}(\emptyset))$ of those automorphisms fixing the 
algebraic closure (in the imaginary expansion $\mathbb M^{eq}$) of 
the empty set. He showed 
in particular that this subgroup is simple modulo a certain class of 
automorphisms, which he referred to as \emph{bounded} (see 
Definition \ref{D:bounded}). Under the continuum hypothesis,  he 
concluded from his result in the countable case the
simplicity of the group of field automorphisms of the complex numbers
$\mathbb C$ which fix pointwise the algebraic closure $\overline{\mathbb
	Q}$ of the prime field. Lascar himself wrote \cite[p. 249]{dL92} \emph{J'ai
	peine \`a imaginer que ce fait n'est pas d\'ej\`a connu} (I cannot imagine
that this result is not already well-known). Lascar's proof 
\cite{dL92} strongly used  that the group of automorphisms of
a countable structure is naturally a Polish group, and hence many of 
his
arguments have a topological flavour. His proof has two main
ingredients: firstly, the only bounded  automorphism of the field
$\mathbb C$ is the identity. Secondly, he manages to 
fuse partial
elementary maps defined on independent subsets, since algebraic 
independence  coincides with non-forking independence for the stable 
theory of the field $\mathbb C$,
which eliminates imaginaries, so types over  algebraically closed
subsets of $\mathbb C$ are stationary. Lascar's proof was adapted by
Evans, Ghader\-nezad and Tent \cite[Example 3.14]{EGT16} to 
show 
the simplicity of the group of
automorphisms of a countable saturated differentially closed field 
fixing pointwise all differentially algebraic elements. \\

In  \cite{dL97} Lascar provided a more direct proof of the simplicity 
of $\Aut(\mathbb
C/\overline{\mathbb Q})$ in pure algebraic terms, circumventing 
both the use of topological
arguments as well as the continuum hypothesis. His second proof 
holds for every uncountable almost strongly minimal set in a countable 
language (see \cite[Partie 4]{dL93}) and relies on a clever
use via transfinite induction of the two 
main  ingredients of his original proof \cite{dL92}.

Motivated by Lascar's latter approach 
in the uncountable case, we will present here a unifying 
approach to 
the proof given in \cite{dL97} (see also the version 
in French \cite{dL93}) to study  the automorphism
group of several uncountable fields equipped with operators. More 
precisely, we show the simplicity of the group of  automorphisms
fixing the closure of the prime
field with respect to a natural pregeometry, modulo the subgroup of
bounded automorphisms (see Definition \ref{D:bounded}). The description of bounded
automorphisms of several theories of fields with
operators already appears in \cite{BHMP17, fW20}. Our work
deals mainly with the case of stable theories: all but one 
example are stable. We retrieve  the existing results  (since they  are all  $\omega$-stable), and get
several new examples. Our method gives the blueprint to obtain many more.  
The unstable example is the theory ACFA$_0$
of existentially closed difference fields of characteristic $0$, which
is unstable, but however supersimple, and therefore has a good notion of
independence. It is likely that our methods will also work for
other well-behaved simple theories.

Whilst uncountably saturated models 
exist for stable theories, this need no longer be the case for simple 
theories without assuming additional set-theoretical assumptions.
A
generalisation
of
saturation in its own cardinality $\kappa>\aleph_0$ is the notion of 
$\kappa$-primeness over a given set of parameters (see Definition \ref{D:prime}). The existence 
and uniqueness of 
$\kappa$-prime models over a given subset of parameters  is due to 
Shelah for
stable theories \cite[Theorem IV.4.18]{sSbook} under certain
set-theoretic assumptions on the cardinal $\kappa$. An analogous result was obtained by the second author (\cite[Theorem 3.14]{zC23}) for the unstable theory ACFA$_0$.

In this work, we will provide a general approach, encompassing both 
the case of uncountably saturated
models (if they exist) as well as the case of  $\kappa$-prime 
models, for
suitable theories of fields with operators, to deduce the following  
results: 
\begin{thA}\textup{(}\cf~\ Theorem \ref{T:main} \& Remark \ref{R:unbounded_movemax2} \textup{)}~ 
	Consider a complete simple theory $T$ in some language $\LL$ satisfying Properties
	\ref{H:single_gen}-\ref{H:addgen}  as in Section
	\ref{S:Prel}. Fix a cardinal $\kappa\ge |\LL|^+$ as well as a $\kappa$-tame model $\UU$ over a subset $\paraset$ of size strictly less than $\kappa$ (see Definition \ref{D:tame_model}). For every subset $\paraset$, we have its corresponding closure  $\cl(\paraset)$ as in Definition \ref{D:closure}.
	
	Given an  automorphim $\tau$ of  $\UU$  fixing $\cl(\paraset)$ pointwise which moves maximally (see Definition \ref{D:moves_max}), every  automorphism $\nu$ of $\UU$ fixing $\cl(\paraset)$ pointwise can be written as the product of four conjugates of $\tau$ and $\tau\inv$.
	
	In particular, under the conditions of Remark \ref{R:unbounded_movemax}, the group $\Aut(\UU/\cl(\paraset))$ of automorphisms of
	$\UU$ fixing  $\cl(\paraset)$ pointwise is simple modulo the normal
	subgroup of all bounded automorphisms fixing  $\cl(\paraset)$ pointwise (see Definition \ref{D:bounded}).
\end{thA} 

For some particular theories (listed below), an explicit description of the bounded automorphisms can be deduced. In the particular examples of uncountable sets and vector spaces over a field of uncountable dimension (both $1$-based strongly minimal), we recover thus the results of Baer \cite{rB34} and Rosenberg \cite{aR58} mentioned at the beginning: 
\begin{thB}\textup{(}cf.~ Proposition \ref{P:classical_sm} \& Theorem \ref{T:mainExamples}\textup{)}~ 
	
	For the two classical examples of $1$-based strongly minimal theories, we have the following: Given an uncountable cardinal $\kappa$ and an infinite set $\mathcal M$ of cardinality $\kappa$ with no additional structure, the group of permutations of $\mathcal M$ is simple modulo the subgroup of permutations of support strictly less than $\kappa$.
	
	Similarly, given a field $K$ and an  uncountable cardinal $\kappa>|K|$, the group of isomorphisms of a $K$-vector space $V$ of cardinality $\kappa$ is simple modulo the subgroup of those isomorphisms $\varphi$ such that for some $\lambda$ in $K$, the eigenspace $\mathrm{Ker}(\varphi-\lambda\cdot \mathrm{Id}_V)$ has codimension strictly less than $\kappa$. 
	
	Moreover, for each of the following countable theories of fields with operators:
	\begin{itemize}
		\item algebraically closed fields with the closure operator given by the
		field algebraic closure;
		\item differentially closed fields in characteristic $0$ with finitely
		many commuting  derivations with the closure operator given by the
		elements which are not differentially transcendental;
		\item  differential fields in characteristic $0$, maximal
		with the property of omitting a given strictly minimal
		type $X$ with the same closure as above;
		\item difference  closed fields  in characteristic $0$ with the closure
		operator given by the elements of transformal transcendence degree $0$;
		
		\item  proper pairs of algebraically
		closed fields $(K,E)$ with the closure operator \[ \cl(B)= {
			E(B)^\mathrm{alg}};\]  
		\item separably closed fields $K$ in characteristic $p$ and
		infinite imperfection degree with the closure
		operator \[\cl(B)=\bigcap\limits_{n\in\N} \bigl(K^{p^n}\acl(B)\bigr).\]            
	\end{itemize}
	Given an uncountable model saturated $\UU$ in its
	uncountable cardinality $\kappa$ (if such models exist) and a subset $\paraset$  of parameters of size strictly less than $\kappa$, the group of automorphisms of $\UU$ fixing pointwise $\cl(\paraset)$ is simple.

	More generally, for any of the above theories, given an uncountable cardinal
	$\kappa$ (with ${\rm cof}(\kappa)\geq \aleph_1$  in the last example) and a $\kappa$-prime model $\UU$ over $\cl(\paraset)$, where $\paraset$ is a subset of   size strictly less than $\kappa$,  the automorphism group 
	$\Aut(\UU/\cl(\paraset))$ is simple. 
\end{thB}

\medskip
\noindent {\bf Acknowledgements:} The authors would like to thank the anonymous referee for the comments and suggestions which have considerably improved our previous presentation.

\section{Preliminaries}\label{S:Prel}

Consider a first-order complete theory $T$ in a language 
$\LL$. We will 
work inside a
$\kappa$-saturated model $\UU$ of $T$ with $\kappa\ge |\LL|^+$, so all subsets we consider will have size 
strictly less than $\kappa$, unless explicitly stated. Furthermore, we impose that the 
$\LL$-theory $T$ is simple. 

In this section, we will list and discuss a series of 
properties, which will allow us to study the simplicity of the 
automorphism group.  

\smallskip
Recall that a type $p$ over $\emptyset$ is \emph{stationary} if for 
every subset $A$ of $\UU$, any two realizations of $p$  which are 
both independent  from $A$ have the same type over $A$. Here, independence means non-forking independence in the sense of the simple theory $T$. 

\begin{hyp}\label{H:single_gen}
	There exists a stationary (non-algebraic) type $p_0$ over 
	$\emptyset$ such that every element of $\UU$ is algebraic over 
	finitely many realizations of $p_0$.
\end{hyp}

\begin{remark}\label{R:exemples_base}
	We will mostly consider two following classes of theories satisfying 
	Property 
	\ref{H:single_gen}: 
	\begin{enumerate}[(a)]
		\item The theory $T$ is almost strongly minimal, that is, there is a 
		strongly minimal set $X$ defined over 
		$\emptyset$ such that (every point in) our model $\UU$ is algebraic over $X(\UU)$. The 
		unique non-algebraic type containing the definable set $X$ is our type $p_0$.
		\item The universe $\UU$ of $T$ has the structure of a definable 
		group $G$ without parameters. We will
		restrict our attention to groups with a unique generic type $p_0$,
		which is moreover stationary. In particular every element of
		$G$ is the product of two realisations of $p_0$.
	\end{enumerate}
\end{remark}

\noindent{\bf Convention}. From now on,
we will refer to the above type $p_0$ as \emph{the  generic type}. Given a subset $A$ of $\UU$, 
we will say that the element $b$ of $\UU$ \emph{is generic over $A$} if $b$ realises the
unique non-forking extension of $p_0$ to $A$.

\begin{definition} \label{D:orth}
	Two types $p$ in $S(A_1)$ and $q$ in $S(A_2)$ are \emph{orthogonal} (denoted $p\perp q$)
	if whenever a set $C$ contains $A_1\cup A_2$, any realisation $b_1$ of a non-forking extension of $p$ to $C$ is independent over $C$ from every realisation $b_2$
	of a non-forking extension of $q$ to $C$, that is, we have that $b_1\ind_C b_2$ whenever $b_i\ind_{A_i}C$ for $i=1, 2$. 
\end{definition}

In \cite[Section 3.5]{fWbook},  previous work of \cite{eH85} on certain closure operators extending the (model-theoretic) algebraic closure was adapted in order to produce closure operators arising from an arbitrary collection of types.   In our context,  the
closure is to be taken inside
the ambient model $\UU$ with respect to the generic type $p_0$  (so the closure does depend on the model). By Property 
\ref{H:single_gen}, the type of every tuple in $\UU$ is 
analysable with respect to the generic type, so that in our concrete case, the 
closure with respect to the  generic type $p_0$ 
over $\emptyset$  can be described as follows below. 

\begin{definition}\label{D:closure}
	Given a subset $A$ of $\UU$, an element $b$ \emph{belongs to the closure}
	$\cl(A)$ \emph{of} $A$ if every extension of
	$tp(b/A)$ is orthogonal to $p_0$, that is, for
	every
	subset $C$ containing $A$ and every generic
	element $g$ over $C$, we have that $g$ remains generic over
	$C\cup\{b\}$.  
	
	We say that the subset $X$ is \emph{cl-closed} if $\cl(X)=X$. The set $X$ is \emph{cl-generated by $A$} if $X=\cl(A)$.
\end{definition}

Note that  every element which is algebraic over $A$ (in the 
model-theoretic
sense)  belongs to $\cl(A)$.  In contrast to the algebraic
closure,  in most examples of fields with operators we are
interested in, the closure of a set will be rather large (e.g., of
cardinality $\geq\kappa$ in a $\kappa$-saturated model), and may
increase as the model changes. 
For example, in the case of 
differentially closed fields of characteristic $0$,  every constant element 
(that is, whose derivative is $0$) belongs to $\cl(\emptyset)$.

This notion of closure  satisfies some
important properties if every element is analysable with respect to $p_0$, which justifies the following assumption. 

\medskip 
{\bf Henceforth, we will assume that the simple theory $T$ satisfies 
	Property \ref{H:single_gen} with respect to the fixed generic type 
	$p_0$.}

\begin{remark}\label{R:closure}\textup{(}\cf\ \cite[Lemmata
	3.5.3 and 3.5.5]{fWbook}\textup{)}~
	\begin{enumerate}[(a)]
		\item (Usual properties of a closure operator) If $A\subset B$, then $\cl(A)\subset \cl(B)$ and
		$\cl(\cl(A))=\cl(A)$. 
		\item If $g$ is generic over $A$, then it remains so over $\cl(A)$. In particular, no generic element over $A$ lies in $\cl(A)$. 
		\item Given  two subsets $B$ and $C$ of $\UU$ with a common subset
		$A$,
		\[ B\ind_A C \quad \Longrightarrow \quad \cl(B) \ind_{\cl(A)}
		\cl(C).\]
	\end{enumerate}
\end{remark}
It follows from Local Character and the above Remark that the closure of a subset $A$ of cardinality possibly larger than $|\LL|$ is the union of all $\cl(A_0)$, where $A_0$ runs among the subsets of $A$ of cardinality at most $|\LL|$.

\medskip The stationarity of the type $p_0$ will be crucial  in many of our proofs. In a stable theory with weak elimination
of 
imaginaries, types over algebraically closed sets are always
stationary. As shown
in \cite[Theorem 5.3 and Corollary B.11]{CH04} for difference  closed
fields (which are unstable),  types of the form $\tp(a/B)$ with $B=\cl(B)\cap
\acl(B,a)$ are stationary , so this motivates the following property.
\begin{hyp}\label{H:closed_stat}
	Types over  relatively
	$\cl$-closed subsets are stationary: Given subsets $A_1$, $A_2$ and $B$
	with a common algebraically closed
	subset $C$ such that \[ \acl(A_1)\cap \cl(C)= C= \acl(A_2)\cap
	\cl(C),\] if $A_1\equiv_C A_2$ and \[ A_1\ind_C B \text{ and }
	A_2\ind_C B,\] then $A_1\equiv_B A_2$.
\end{hyp}

\medskip
The following property will be fundamental in order to extend partial
automorphisms to arbitrary subsets in terms of a small chain of extensions
obtained by successively adding generic elements.

\begin{hyp}\label{H:addgen}
	For every $\kappa$-saturated model $\M$ and every subset $\paraset$ of $\M$ of cardinality strictly less than $\kappa$, we have that $\M=\cl(\paraset \cup A)$, where $A$ enumerates a sequence  of independent realizations over $\paraset$ of the generic type. 
\end{hyp}

\begin{remark}\label{R:addgen_weaker_version}
	\begin{enumerate}[(a)]
		\item It follows from Remark \ref{R:closure} (b) that the above sequence $A$ in Property \ref{H:addgen} is maximal with respect to being independent realizations of the generic type over $\paraset$. Moreover, notice that $|A|\ge \kappa$ by $\kappa$-saturation of $\M$. 
		\item\label{I:addgen_old} If Property \ref{H:addgen} holds, then for every 
		subset $\paraset$ of the ambient model $\UU$ each element $b$ of $\UU$ 
		is contained in $\cl(\paraset\cup A_0)$, where $A_0$ enumerates a sequence of length at most $|\LL|$ of independent realizations of the generic  type over $\paraset$. Indeed, we know that $\UU=\cl(\paraset\cup A)$, where $A$ enumerates a sequence of independent realizations over $\paraset$ of the generic type. Applying  Local Character to $\tp(b/\paraset\cup A)$, we deduce from Remark \ref{R:closure} (c) that $b$ belongs to $\cl(\paraset\cup A_0)$ for some $A_0\subset A$ of cardinality at most $|\LL|$, as desired. 
	\end{enumerate}
\end{remark}

\begin{definition}\label{D:atomic}
	Given a finite tuple $c$ and a subset $K$ of  $\UU$ of cardinality possibly larger than $\kappa$, the type 
	$\tp(c/K)$ is $\kappa$-\emph{isolated} if there is some subset 
	$E$ of $K$
	of cardinality strictly less than $\kappa$ such that 
	$\tp(c/E)\vdash \tp(c/K)$.
	
	The  model $\UU$ is $\kappa$-\emph{atomic} over $K$
	if for every finite tuple $c$ of $\UU$, the type $\tp(c/K)$ is  \emph{$\kappa$-isolated}.
\end{definition}

\begin{remark}\label{R:Konnerth}
	If $\UU$ is $\kappa$-atomic over $K$, then for every $c$ in $\UU$ and every partial elementary map $f: K\to \UU$,  there is an
	extension of $f$ to a partial elementary map defined on
	$K\cup\{c\}$. Indeed, by $\kappa$-atomicity, we have that $\tp(c/E)\vdash \tp(c/K)$ for some subset $E$ of $K$ of cardinality strictly less than $\kappa$. In particular, the same holds for the images of these types under the map $f$. By $\kappa$-saturation of $\UU$, we can find a realization $d$ of $f(\tp(c/E))$, so we can extend $f$ to $K\cup\{c\}$ mapping $c$ to $d$. 
\end{remark}

The following property was 
shown by Konnerth for the theory DCF$_m$ \cite[Lemma 2.3]{rK02}. It will be  an important
tool towards extending partial isomorphisms.

\begin{definition}\label{D:Konnerth} The theory $T$ satisfies Property (WH) (for \emph{weak homogeneity}) if every $\kappa$-saturated model $\UU$ of $T$ is $\kappa$-atomic over subsets of the form 
	\[K=\acl(\cl(A_1)\cup\cdots\cup\cl(A_n)\cup B),\] such that all $A_i$'s and $B$ have cardinality strictly less than $\kappa$.

	In particular, given some $c$ in $\UU$, every partial elementary map $f: K\to \UU$ extends to a partial elementary map defined on $K\cup\{c\}$.      
\end{definition}

\section{Tame models}\label{S:tame}
{\bf In this section, we assume that   $\kappa\ge |\LL|^+$ 
	and that the simple $\LL$-theory $T$ satisfies Properties
	\ref{H:single_gen}-\ref{H:addgen}. }

In order to prove Theorem A of the introduction (which corresponds to Theorem \ref{T:main}), we will need to restrict our focus to a particular class of $\kappa$-saturated models, which we
call 
\emph{$\kappa$-tame}.
\begin{definition}\label{D:tame_model}
	A $\kappa$-saturated model $\UU$ of the theory $T$ is
	\emph{$\kappa$-tame} over the subset  $\paraset$ of size strictly less than $\kappa$ if it satisfies the following conditions:
	\begin{enumerate}[(a)]
		\item\label{I:tame_dim} There is a subset $A$ of $\UU$ of size $\kappa$ enumerating an independent sequence over $\paraset$ of
		realizations of the generic type $p_0$ (or rather of the non-forking extension of $p_0$ to $\paraset$) with $\UU=\cl(\paraset \cup A)$.
		
		\item\label{I:tame_extension} (Strong
		homogeneity) Every partial elementary map $f: K\to \UU$ fixing $\cl(\paraset)$ pointwise with
		\[ K=\acl(\cl(\paraset)\cup\cl(A_1)\cup\cdots \cup\cl(A_n)\cup B),\] and \[f(K)=\acl(\cl(\paraset)\cup\cl(f(A_1))\cup \cdots\cup \cl(f(A_n))\cup f(B)),\]                   
		\noindent   such that all
		$A_i$'s and $B$ have cardinality strictly less than
		$\kappa$, 
		extends to a global automorphism of $\UU$ fixing $\cl(\paraset)$ pointwise.
	\end{enumerate}
\end{definition}

\begin{remark}\label{R:tame_equiv}
	\begin{enumerate}[(i)] 
		\item It follows directly from condition
		(\ref{I:tame_dim}) that the model $\UU$ does not
		contain a sequence of
		length $\kappa^+$ of independent realizations over $\paraset$ of the generic type.
		\item\label{I:tame_equivalence_set} We may replace condition
		(\ref{I:tame_dim}) in Definition \ref{D:tame_model} by
		$\UU=\cl(\paraset\cup B)$ for some subset $B$ of size  $\kappa$ (not
		necessarily consisting of realizations of $p_0$).
		Indeed, one applies Remark \ref{R:addgen_weaker_version} (\ref{I:addgen_old}) to some
		enumeration of $B$ of order type $\kappa$ to obtain an
		independent set $C$ of cardinality
		at most $\kappa$ of realisations of the generic type over $\paraset$,  such that $B\subset\cl(\paraset\cup C)$, so
		$\cl(\paraset\cup C)=\UU$. Note that the set $C$ has cardinality $\kappa$, by Remark \ref{R:closure} (b) and 
		$\kappa$-saturation of $\UU$. 
	\end{enumerate}
\end{remark}

Given a subset $K=\acl(\cl(\paraset)\cup\cl(A_1)\cup\cdots \cup\cl(A_n)\cup B)$ as in Definition \ref{D:tame_model}(b) and
a sequence $(c_i)_{i<\lambda}$ with $\lambda<\kappa$, the set $\acl(K,
\{c_i\}_{i<\lambda})$ also satisfies the hypotheses of Definition
\ref{D:tame_model}(b). Thus, the following remark follows immediately  from the stationarity of the generic type in     Property \ref{H:single_gen}:
\begin{remark}\label{R:tame_ext_dim}
	Consider a $\kappa$-tame model $\UU$ over $\paraset$ of the theory $T$ and a partial
	elementary map $f: K\to f(K)\subset \UU$ as  in Definition \ref{D:tame_model}(b). Given  $\lambda<\kappa$ as well as two sequences
	$(c_i)_{i<\lambda}$ and $(d_i)_{i<\lambda}$ of realizations of the
	unique generic type which are respectively independent over $K$ and
	over $f(K)$,  there is an extension of $f$ to a
	global automorphism  
	mapping the sequence $(c_i)_{i<\lambda}$ to the sequence
	$(d_i)_{i<\lambda}$. In particular, there is a global automorphism of $\UU$ mapping  $\cl(K,
	\{c_i\}_{i<\lambda})$ onto $\cl(f(K), \{d_i\}_{i<\lambda})$ and fixing $\cl(\paraset)$ pointwise.

\end{remark}
\begin{lemma}\label{L:tame_ext_ind}
	Let $\UU$ be a $\kappa$-tame model over $\paraset$ of the theory $T$ and consider three 
	$\cl$-closed subsets $X$, $Y_1$ and $Y_2$ of $\UU$ with $\paraset \subseteq X \subseteq
	Y_1 \cap Y_2$, each $\cl$-generated over $\paraset$ by subsets of size strictly less
	than $\kappa$. Given elementary automorphisms $g_i$ 
	of $Y_i$ for $1\le i\le 2$ 
	fixing $\cl(\paraset)$ pointwise which agree on $X$ and are such that $g_1(X)=g_2(X)= X$, if 
	$Y_1 \ind_X Y_2$,
	then there is a global automorphism of $\UU$ which extends both $g_1$
	and $g_2$ (and hence fixes $\cl(\paraset)$ pointwise). 
\end{lemma}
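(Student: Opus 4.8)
The plan is to build the desired global automorphism as an extension of the union $g:=g_1\cup g_2$, so the crux is to show that $g$ is a partial elementary map and then invoke $\kappa$-tameness. First note that $g$ is well defined: since $X$ is $\cl$-closed it is algebraically closed, so from $Y_1\ind_X Y_2$ the usual computation in the simple theory $T$ gives $\acl(Y_1)\cap\acl(Y_2)=X$, and as $Y_1,Y_2$ are algebraically closed this means $Y_1\cap Y_2=X$; hence $g_1$ and $g_2$ agree on the whole intersection and $g$ is a bijection of $Y_1\cup Y_2$ onto itself, because $g_i(Y_i)=Y_i$. For later bookkeeping, fix for $i=1,2$ a subset $A_i\subseteq Y_i$ of size $<\kappa$ with $\cl(A_i)=Y_i$ which is moreover $g_i$-invariant (replace any small $\cl$-generating set $A_i^0$ by $\bigcup_{n\in\Z}g_i^{\,n}(A_i^0)$, still of size $<\kappa$, still $\cl$-generating $Y_i$, and now satisfying $g_i(A_i)=A_i$). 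Then $Y_i=\acl(\cl(A_i))=\acl(\cl(g_i(A_i)))$, so every map considered below has exactly the shape required to apply Definition~\ref{D:tame_model}(b).

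To show $g$ is partial elementary, extend $g_1$ to a global automorphism $\hat g_1$ of $\UU$ using Definition~\ref{D:tame_model}(b). Fix finite tuples $\bar a$ in $Y_1$ and $\bar b$ in $Y_2$; it is enough to see $\bar a\bar b\equiv g_1(\bar a)\,g_2(\bar b)$. Applying $\hat g_1^{-1}$ and setting $\bar b':=\hat g_1^{-1}(g_2(\bar b))$, this reduces to $\bar b\equiv_{\bar a}\bar b'$. Two observations prepare the amalgamation. Since $\hat g_1\restr{X}=g_1\restr{X}=g_2\restr{X}$, the composite $\hat g_1^{-1}\circ g_2$ fixes $X$ pointwise, so $\tp(\bar b'/X)=\tp(\bar b/X)$. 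And since $\hat g_1$ stabilizes $Y_1$ and $X$ as sets, applying $\hat g_1^{-1}$ to $Y_1\ind_X Y_2$ yields $Y_1\ind_X\hat g_1^{-1}(Y_2)$, hence $\bar a\ind_X\bar b'$; also $\bar a\ind_X\bar b$ is immediate from $Y_1\ind_X Y_2$.

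It remains to deduce $\bar b\equiv_{\bar a}\bar b'$ from $\bar b\equiv_X\bar b'$, $\bar a\ind_X\bar b$ and $\bar a\ind_X\bar b'$; this is where relative stationarity, Property~\ref{H:closed_stat}, is used. Since $X$ may have size $\ge\kappa$ the property does not apply to $X$ directly, so by the local character of forking pass to a countable algebraically closed $X_1\subseteq X$ with $\bar b\ind_{X_1}X$ and $\bar b'\ind_{X_1}X$. Restricting gives $\tp(\bar b/X_1)=\tp(\bar b'/X_1)$; transitivity of independence gives $\bar b\ind_{X_1}\bar a$ and $\bar b'\ind_{X_1}\bar a$; and since $\cl(X_1)\subseteq\cl(X)=X$ and $\bar b\ind_{X_1}X$, the standard simple-theory identity $\acl(X_1\bar b)\cap X=X_1$ secures the hypothesis $\acl(X_1\bar b)\cap\cl(X_1)=X_1$ of Property~\ref{H:closed_stat} (likewise for $\bar b'$). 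Property~\ref{H:closed_stat} with common base $X_1$ then gives $X_1\bar b\equiv_{\bar a}X_1\bar b'$, hence $\bar b\equiv_{\bar a}\bar b'$, so $g=g_1\cup g_2$ is partial elementary.

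Finally, extend $g$ to a partial elementary self-map $\bar g$ of $K:=\acl(Y_1\cup Y_2)=\acl(\cl(A_1)\cup\cl(A_2))$ (possible since $g$ maps $Y_1\cup Y_2$ onto itself). By the $g_i$-invariance of the $A_i$, one has $\bar g(K)=\acl(\cl(g_1(A_1))\cup\cl(g_2(A_2)))$, so $\bar g$ has exactly the form in Definition~\ref{D:tame_model}(b) with $n=2$ and $B=\emptyset$, and therefore extends to a global automorphism of $\UU$, which extends both $g_1$ and $g_2$. I expect the main obstacle to be the elementarity of $g$, and inside it the bookkeeping needed to feed Property~\ref{H:closed_stat}: one must genuinely replace $X$ by a small algebraically closed base over which $\bar b$ and $\bar b'$ remain independent from $X$, and one must exploit that $\hat g_1$ stabilizes $Y_1$ and $X$ setwise so that transporting $Y_1\ind_X Y_2$ through $\hat g_1^{-1}$ keeps $Y_1$ itself (rather than a conjugate) on the left. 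The remaining points — well-definedness of $g$, the $g_i$-invariant generating sets, and the final extension — are routine.
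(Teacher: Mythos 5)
Your proof is correct, and it rests on the same two pillars as the paper's: one application of tameness to globalize one of the maps, relative stationarity (Property (\ref{H:closed_stat})) over the $\cl$-closed base $X$ to fuse the two maps, and a second application of tameness to the fused map on $\acl(Y_1\cup Y_2)$. The decomposition is different, though. The paper extends $g_2$ to a global $\widehat{g_2}$, observes that $Y_1'=\widehat{g_2}\inv(g_1(Y_1))$ and $Y_1$ have the same type over $X$ and are both independent from $Y_2$ over $X$, applies Property (\ref{H:closed_stat}) to the enumerated sets to get $Y_1'\equiv_{Y_2}Y_1$, and produces the answer as a composite $\widehat{g_2}\circ\widehat{h}$ with $h$ fixing $Y_2$ pointwise; you instead verify directly, finite tuple by finite tuple, that $g_1\cup g_2$ is elementary and then extend it. Your version buys something concrete: Property (\ref{H:closed_stat}) as stated only applies to sets of cardinality strictly less than $\kappa$, whereas $Y_1$, $Y_1'$ and $Y_2$ are $\cl$-closures and may well have cardinality $\ge\kappa$; your reduction via local character to a countable algebraically closed $X_1\subseteq X$ with $\bar b\,\bar b'\ind_{X_1}X$, together with the identity $\acl(X_1\bar b)\cap X=X_1$, is exactly the bookkeeping needed to invoke the stationarity hypothesis literally, and it is left implicit in the paper. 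Your other additions --- the observation that $Y_1\cap Y_2=X$ so that $g_1\cup g_2$ is well defined, the $g_i$-invariant generating sets $A_i$, and the final application of Definition \ref{D:tame_model}(b) with $n=2$ and $B=\emptyset$ --- are all sound.
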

\begin{proof} Set $Y_1 = \cl(\paraset\cup A_1)$ for some subset $A_1$ of cardinality
	stricly less than $\kappa$.  Note that $Y_1 = \cl(\paraset\cup g_1(A_1))$, since $g_1$ is an
	elementary map of $Y_1$ onto itself.
	By $\kappa$-tameness,
	there exists a global automorphism $\widehat{g_2}$ of $\UU$ extending
	$g_2$ fixing $\cl(\paraset)$ pointwise. After fixing an enumeration of $Y_1$, the subset $Y'_1 =
	\widehat{g_2}\inv(g_1(Y_1))$ has the same type as $Y_1$ over $X$, since
	$g_1$ and $g_2$ (and thus $\widehat{g_2}$) agree on $X$. Note that
	$Y'_1 \ind_X Y_2$. By Property \ref{H:closed_stat}, we have
	$Y'_1 
	\equiv_{Y_2}  Y_1$, since $X$ is $\cl$-closed. Thus, there is an
	elementary map~$h$ on $\acl(Y_1 \cup Y_2)$ fixing pointwise
	$Y_2$  whose restriction  to $Y_1$ coincides with
	$\widehat{g_2}\inv\circ  g_1$. Note that $Y'_1 = \cl(\paraset\cup h(A_1))$, so we obtain
	by tameness  a global automorphism $\widehat{h}$ of $\UU$ extending
	$h$. By construction, the automorphism  $\widehat{g_2} \circ
	\widehat{h}$ extends both $g_1$ and $g_2$.
\end{proof}

Under the additional assumption that the simple theory  $T$ satisfies
Property (WH)(Definition \ref{D:Konnerth}), there are  natural
examples of $\kappa$-tame models:  saturated models of
cardinality $\kappa$ and $\kappa$-prime models of $T$ are $\kappa$-tame.  

\begin{lemma} \label{L:sat_tame}
	Assume the simple theory $T$ satisfies Properties
	\ref{H:single_gen}-\ref{H:addgen} and (WH). If 
	$\UU$ is a saturated model of $T$ 
	of cardinality $\kappa$, then  $\UU$ is
	$\kappa$-tame over every subset $\paraset$ of size strictly less than $\kappa$.
\end{lemma}
\begin{proof}
	Condition (a) of Definition \ref{D:tame_model}  holds
	trivially, since the model $\UU$ has cardinality $\kappa$,
	by Remark
	\ref{R:tame_equiv}(\ref{I:tame_equivalence_set}).  Condition (b) holds clearly by a standard Back-\&-Forth argument, using Property (WH) (and Remark \ref{R:Konnerth}). 
\end{proof}

Whenever $T$
is stable,  saturated models of cardinality $\kappa$ 
exist if and only if $T$ is stable at the cardinal $\kappa$ (\cite{vH75,
	sSbook}). For a general theory $T$, if $\kappa=\lambda^+$ for
some cardinal $\lambda\ge |\LL|$ with $\lambda^+=2^\lambda$, or
if $\kappa$ is regular and strongly inacessible, then there are
saturated models of cardinality $\kappa$. However, these
set-theoretic  assumptions go beyond ZFC.

Shelah introduced the notion of $\kappa$-prime models \cite[Chapter
IV]{sSbook} (see also \cite[Chapitre VI]{dLBook}) for an arbitrary
stable theory, generalizing Morley's notion of prime models for an
$\omega$-stable theory.

\begin{definition}\label{D:prime}
	A model $\UU$ of $T$ is
	$\kappa$-\emph{prime} over  $A\subseteq \UU$ if it
	is $\kappa$-saturated and 
	elementarily embeds over $A$ into every 
	$\kappa$-saturated model of $T$ 
	containing $A$.
\end{definition}

\begin{remark}\label{R:Exist} 
	Shelah \cite[Theorem  IV.4.14 and IV.4.18]{sSbook} showed the existence and uniqueness (up to isomorphism) of
	$\kappa$-prime models over arbitrary subsets $A$ (even if the size
	of $A$ is greater than  $\kappa$) whenever the 
	theory is stable, as long as the cofinality of $\kappa$ is
	at least $|\LL|^+$. Indeed, Shelah shows that a $\kappa$-saturated model of the stable theory is $\kappa$-prime over $A$ if and only if it is 
	$\kappa$-atomic over $A$ and contains no (non-constant)
	$A$-indiscernible sequence of length $\kappa^+$.
	
	In the particular case that $T$ is superstable, then the same holds for all  $\kappa\geq|\LL|^+$, regardless of the cofinality.
	
	Whilst every  completion $T$ of the theory
	ACFA$_0$ is simple yet unstable,  the second author obtained in \cite{zC23} an analogous result to the existence and uniqueness of $\kappa$-prime models  for ACFA$_0$ as well as their characterisation, with no restriction on
	$\kappa\geq\aleph_1$, but assuming that the base set $A$ is algebraically closed 
	and that the fixed field of $A$ is
	a $\kappa$-saturated pseudo-finite field.      
\end{remark}
Motivated by the above description of $\kappa$-prime models, we show the following result:  
\begin{prop}\label{P:prime_tame}
	Assume that the  simple theory $T$ satisfies Properties
	\ref{H:single_gen}-\ref{H:addgen} and (WH). We suppose furthermore the following condition for every $\kappa$-saturated model $\M$:
	
	For every subset of $\M$ of the form $K=\acl(\cl(A_1)\cup\cdots \cup\cl(A_n)\cup B),$                    
	\noindent   such that all
	$A_i$'s and $B$ have cardinality strictly less than
	$\kappa$, there exists a unique $\kappa$-prime model over
	$K$, which is characterized by being $\kappa$-atomic over $K$ and
	containing no (non-constant) $K$-indiscernible sequence of length
	$\kappa^+$.

	Then for every $\kappa$-saturated model $\M$ of $T$ and
	${\paraset}\subseteq \M$ of cardinality strictly less than $\kappa$,  the $\kappa$-prime
	model $\UU$ over $\cl_\M({\paraset})$ is $\kappa$-tame over $\paraset$ and
	$\cl_\UU({\paraset})=\cl_\M({\paraset})$, where $\cl_\UU$ and $\cl_\M$ denote the closures taken within the models  $\UU$ and $\M$ respectively.
\end{prop}

\begin{proof}
	By assumption, the $\kappa$-prime model $\UU$ over $\cl_{\M}(\paraset)$ exists. Since $\UU$ embeds into $\mathcal M$ over $\cl_{\M}(\paraset)$, it
	follows immediately that $\cl_\UU(\paraset)=\cl_\M(\paraset)$. From now on, we will just write $\cl(Z)$ without referring to a particular model. Let us now show that each of the conditions in Definition \ref{D:tame_model} of $\kappa$-tameness holds for the $\kappa$-prime model $\UU$ over $\cl(\paraset)$. For the first condition, Property \ref{H:addgen},  yields that  $\UU=\cl(\paraset \cup A)$, where $A$ is a set enumerating a maximal sequence  of independent realizations over $\paraset$ of the generic type (Note that $|A|\ge \kappa$ by saturation). Since the sequence determined by $A$ is $\paraset$-indiscernible by the stationarity of the generic type $p_0$ in Property \ref{H:single_gen}, we conclude from our assumption in the statement that $A$ has cardinality exactly $\kappa$, as desired.
	
	We now show the strong extension property for elementary maps $f: K\to
	\UU$ such that there exists subsets $A_i$'s and $B$ of $\UU$
	of 
	cardinality strictly less than $\kappa$ with \begin{itemize} \item $K=\acl(\cl(\paraset)\cup\cl(A_1)\cup\cdots
		\cup\cl(A_n)\cup B)$;  \item $f(K)=
		\acl(\cl(\paraset)\cup \cl(f(A_1))\cup\cdots\cup\cl(f(A_n))\cup
		f(B))$; 
		\item the map $f$ is the identity on $\cl(\paraset)$.
	\end{itemize}
	Now,  Property 
	(WH) gives that  the $\kappa$-saturated model $\UU$ is $\kappa$-atomic over $K$ and over $f(K)$ as well. Our above characterisation of $\kappa$-prime models implies that $\UU$ contains no non-constant indiscernible sequence of
	length 
	$\kappa^+$ over $\cl(\paraset)$, so $\UU$ does not have such indiscernible sequences over  $K$ nor over $f(K)$. Hence, by the characterization and the uniqueness of
	$\kappa$-prime models in $(ii)$, the partial isomorphism $f:K\to f(K)$
	extends to
	an automorphism of $\UU$.
\end{proof}

\section{Automorphisms of $\kappa$-tame models}\label{S:Lascar}

{\bf In this section we fix a cardinal $\kappa \ge |\LL|^+$ and a simple $\LL$-theory
	$T$, which 
	satisfies Properties  \ref{H:single_gen}-\ref{H:addgen}. We assume furthermore that $T$ has a
	$\kappa$-tame model $\UU$ over a subset ${\paraset}$ of
	cardinality strictly less than $\kappa$.}

\begin{definition}\label{D:familyS}
	We will denote by $\clS$ the collection of subsets of $\UU$ of the form
	$\cl(\paraset \cup A)$, where $A$ enumerates a sequence  of length strictly less than $\kappa$ of independent realizations
	of the generic type over $\paraset$. 
\end{definition}
Note that             each member of $\clS$ is an algebraically closed substructure of $\UU$. Moreover, if $B$ enumerates a sequence of length strictly less than $\kappa$ of independent realizations
of the generic type over $X$ (by Remark \ref{R:closure} it suffices that the  sequence is independent over $D$ with $X=\cl(D)$), then $Y=\cl(X\cup B)$ lies again in $\clS$.

\begin{definition}\label{D:acceptable}
	An extension $X\subseteq Y$ with $X$ and $Y$ in $\clS$ is
	\emph{acceptable} if $Y=\cl(X \cup A)$, where $A$ enumerates a sequence of length  $|\LL|$ of independent realizations of the generic
	type over $X$.
\end{definition}
\begin{remark}\label{R:acceptable}
	For $\alpha<\kappa$, every increasing union $(X_\beta)_{\beta<\alpha}$ in $\clS$ with $X_{\beta+1}$ acceptable over $X_{\beta}$ and $X_\gamma =\bigcup_{\beta<\gamma} X_\beta$ for $\gamma<\alpha$ a limit ordinal belongs again to $\clS$.
\end{remark}

Remark \ref{R:tame_ext_dim} yields immediately the following result:
\begin{lemma}\label{L:iso_accept}
	Given an elementary automorphism $f$ of $X$ in $\clS$ and two
	acceptable extensions $Y_1$ and $Y_2$ of $X$, there is an extension of
	$f$ to an elementary partial map sending $Y_1$ onto $Y_2$. In particular, any two
	acceptable extensions $Y_1$ and $Y_2$ of $X$ are conjugate by an
	automorphism of $\UU$ fixing $X$ pointwise. \qed
\end{lemma}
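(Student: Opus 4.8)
The plan is to use Remark~\ref{R:tame_ext_dim} directly. Write $Y_1 = \cl(X \cup A^{(1)})$ and $Y_2 = \cl(X \cup A^{(2)})$, where $A^{(1)}$ and $A^{(2)}$ each enumerate a countable sequence of independent realizations of the generic type over $X$. Since $X$ is a $\cl$-closed set in $\clS$, it is $\cl$-generated by a sequence of fewer than $\kappa$ independent generics, and $f\colon X\to X$ is an elementary automorphism, so $f(X)=X$ and $f$ qualifies as a partial elementary map of the kind appearing in Remark~\ref{R:tame_ext_dim} (taking $K=X$, which is of the form $\acl(\cl(A_0))$). Both $A^{(1)}$ (independent over $X$) and $A^{(2)}$ (independent over $f(X)=X$) are sequences of realizations of the unique generic type of the same countable length $\lambda=\omega<\kappa$.

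Apply Remark~\ref{R:tame_ext_dim} with $(c_i)_{i<\omega}$ the enumeration $A^{(1)}$ and $(d_i)_{i<\omega}$ the enumeration $A^{(2)}$: there is a global automorphism $\widehat f$ of $\UU$ extending $f$ and sending $A^{(1)}$ to $A^{(2)}$. The ``in particular'' clause of that remark then gives $\widehat f(\cl(X\cup A^{(1)})) = \cl(f(X)\cup A^{(2)}) = \cl(X\cup A^{(2)})$, i.e.\ $\widehat f(Y_1)=Y_2$. Restricting $\widehat f$ to $Y_1$ yields the desired partial elementary map extending $f$ and sending $Y_1$ onto $Y_2$. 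This proves the first assertion.

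For the second assertion, take $f=\mathrm{id}_X$, which is certainly an elementary automorphism of $X$ fixing $X$ pointwise. The first part produces a global automorphism $\widehat f$ of $\UU$ with $\widehat f\restr{X}=\mathrm{id}_X$ and $\widehat f(Y_1)=Y_2$, so $Y_1$ and $Y_2$ are conjugate by an automorphism of $\UU$ fixing $X$ pointwise.

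There is essentially no obstacle here: the lemma is a transcription of Remark~\ref{R:tame_ext_dim} to the vocabulary of acceptable extensions. The only point requiring a word of care is checking that $X$, being an arbitrary member of $\clS$ rather than literally a set of the syntactic shape $\acl(\cl(A_1)\cup\cdots\cup\cl(A_n)\cup B)$, still falls under the hypotheses of Remark~\ref{R:tame_ext_dim}; but this is immediate since $X=\cl(A)$ for $A$ an independent sequence of generics of length $<\kappa$, hence $X=\acl(\cl(A))$ is of the required form with $n=1$ and $B=\emptyset$, and $f(X)=X$ trivially has the matching shape. One should also note that the chain-argument upgrade in the last paragraph of Remark~\ref{R:tame_ext_dim} is not needed, as the acceptable extensions only add a countable sequence.
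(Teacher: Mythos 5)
Your proof is correct and follows exactly the route the paper takes: the paper gives no separate argument for this lemma, stating only that it follows immediately from Remark~\ref{R:tame_ext_dim}, and your write-up simply makes that application explicit (including the worthwhile check that $X=\acl(\cl(A))$ has the required shape). Nothing further is needed.
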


\begin{remark}\label{R:iteration}
	Given $X$ in $\clS$, an element $b$ in $\UU$ as well as a countable
	collection $\mathcal F$ of global automorphisms of $\UU$ which 
	leaves $X$ setwise invariant,  there exists an acceptable extension $Y$
	of $X$ in $\clS$ which contains $b$ and is setwise stable
	under each automorphism of $\mathcal F$.
\end{remark}
\begin{proof}
	Without loss of generality, we may assume that $\mathcal F$ is a
	group (under composition).
	By induction, successively applying Remark \ref{R:addgen_weaker_version} (\ref{I:addgen_old}), we
	construct a increasing sequence $(A_n)_{n\in \N}$ of sets such that:
	\begin{itemize}
		\item the set $A_0$ $\cl$-generates $X$;
		\item the element $b$ belongs to $\cl(A_1)$;
		\item for every $n\ge 1$ in $\N$, every $\tau$ in $\mathcal F$ and every
		$a$ in $A_n \setminus A_{n-1}$, the element $\tau(a)$ belongs to $\cl(A_{n+1})$.
		\item the elements in $A_{n+1}\setminus A_n$ enumerate an 
		independent sequence of realizations of length  $|\LL|$ of the generic type over $A_n$ 
		(and thus over $\cl(A_n)\supset X$);  
	\end{itemize}
	The set $Y=\cl\left(\bigcup_{n\in \N} A_n\right)$ has all the
	desired properties.
\end{proof}

We recall now a modified version of when an automorphism \emph{moves maximally}, according to the
terminology of
Tent and Ziegler (\cite[Definition 2.5]{TZ13}) restricting our attention to the unique generic type $p_0$. 

\begin{definition}\label{D:moves_max}
	An automorphism $\tau$ of a $\kappa$-saturated model $\M$ of $T$ \emph{moves $p_0$ maximally} if for every subset $A$ of $\M$ of cardinality strictly less than $\kappa$ which is stable under the action of $\tau$, there exists a generic element $b$ over $A$ such that $\tau(b)$ is generic over $A\cup\{b\}$, or equivalently, such that  $\tau(b)\ind_A b$. 
\end{definition}
\begin{remark}\label{R:moves_max}
	If $\tau$ moves maximally, then for every subset $A$ of cardinality strictly less than $\kappa$, which need not be stable under the action of $\tau$, there is some generic element $b$ over $\bigcup_{k\in \Z} \tau^k(A)$ with $\tau(b)\ind_A b$. In particular, both $b$ and $\tau(b)$ are generic over $A$.   
\end{remark}
\begin{lemma}\label{L:imageInd} Let $\tau$ be an automorphism of
	$\UU$ moving $p_0$ maximally. If $X$ in
	$\clS$ is stable setwise under the action of $\tau$,  then there is an
	acceptable   extension $Y$ of $X$ such  that $Y$ and $\tau(Y)$ are
	independent over $X$.
\end{lemma}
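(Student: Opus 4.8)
The plan is to build $Y$ as an acceptable extension of $X$ by adjoining a countable independent sequence of generic elements over $X$, chosen so that its $\tau$-image stays independent from $X$ over $X$. First I would note that, since $\tau$ is unbounded and fixes $\cl(\emptyset)$ pointwise, for every subset $A$ of cardinality $<\kappa$ there is a generic $b$ over $A$ with $\tau(b)\notin \cl(A,b)$; in particular, taking $A\supseteq X$, one gets a generic $b$ over $X$ such that $\tau(b)$ is generic over $\cl(X, b)$ (here I use that the generic type is the unique type of non-closure elements, so ``not in $\cl(A,b)$'' means ``generic over $A\cup\{b\}$''). The key point is that this $b$ then satisfies $b \ind_X \tau(b)$: indeed $b$ is generic over $X$, $\tau(b)$ is generic over $\cl(X,b)\supseteq X\cup\{b\}$, hence $\tau(b)$ remains generic over $X\cup\{b\}$, which by the properties of generics recalled in Section~\ref{S:Prel} (product of independent generics, genericity of the factor) gives $\{b,\tau(b)\}$ independent, i.e. $b \ind_X \tau(b)$.

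Next I would iterate this construction $\omega$ many times to produce the full sequence. Build an increasing countable chain of $\cl$-closed sets $X = Z_0 \subseteq Z_1 \subseteq \cdots$, where $Z_{n+1} = \cl(Z_n \cup \{b_n\})$ with $b_n$ chosen, via unboundedness applied to the set $A_n = Z_n \cup \tau(Z_n)$ (which has cardinality $<\kappa$), to be generic over $A_n$ with $\tau(b_n)\notin\cl(A_n,b_n)$, hence generic over $\cl(A_n, b_n)$. Set $Y = \cl\!\left(\bigcup_{n} Z_n\right) = \cl(X \cup \{b_n : n\in\N\})$. By construction $(b_n)_{n\in\N}$ is an independent sequence of realizations of the generic type over $X$ (each $b_n$ is generic over $Z_n \supseteq X \cup \{b_0,\dots,b_{n-1}\}$), so $X \subseteq Y$ is acceptable, and $Y$ lies in $\clS$.

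It remains to check $Y \ind_X \tau(Y)$. By Remark~\ref{R:closure}(b) and the fact that $\tau(Y) = \cl(X \cup \{\tau(b_n) : n\in\N\})$ (as $\tau$ fixes $X$ setwise and commutes with $\cl$), it suffices to show $\{b_n : n\in\N\} \ind_X \{\tau(b_n): n\in\N\}$. One proves by induction on $n$ that the $2n$-tuple $(b_0,\tau(b_0),\dots,b_{n-1},\tau(b_{n-1}))$ is independent over $X$: at stage $n$, $b_n$ is generic over $A_n$, which contains $Z_n \supseteq X\cup\{b_0,\dots,b_{n-1}\}$ and $\tau(Z_n) \supseteq X\cup\{\tau(b_0),\dots,\tau(b_{n-1})\}$, so $b_n$ is independent from everything built so far over $X$; and $\tau(b_n)$ is generic over $\cl(A_n, b_n)$, hence independent over $X$ from $A_n \cup \{b_n\}$, which again absorbs all previously constructed elements together with $b_n$. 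So the extended $2(n+1)$-tuple is independent over $X$, completing the induction; passing to the union and using finite character of nonforking gives the claim. The main obstacle is purely the bookkeeping in this last induction — making sure that at each stage the set $A_n$ over which unboundedness is invoked genuinely contains both $Z_n$ and $\tau(Z_n)$, so that the new pair $(b_n, \tau(b_n))$ is independent from all earlier coordinates and not merely from $X$; once the sets $A_n$ are set up this way, everything else is a routine application of the generic-type calculus and Remark~\ref{R:closure}.
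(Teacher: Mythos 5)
This is essentially the paper's own argument: you build the same countable sequence by applying unboundedness at stage $n$ over a set containing both the previously constructed elements and their images under $\tau$, and then pass from the independent sequence of pairs $(b_n,\tau(b_n))$ to the closures $Y$ and $\tau(Y)$ via Remark~\ref{R:closure}(b). The only differences are notational (the paper takes $X'=\cl(X,a_j,\tau(a_j):j<n)$ and invokes unboundedness over $X'\cup\tau(X')$ rather than over $Z_n\cup\tau(Z_n)$), and your reading of ``$\tau(b)\notin\cl(A,b)$'' as genericity over $A\cup\{b\}$, hence independence, is exactly the (implicit, regularity-flavoured) step the paper itself takes when it deduces $\tau(a_n)\ind_X a_0,\tau(a_0),\ldots,a_{n-1},\tau(a_{n-1}),a_n$.
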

Notice that $\tau(Y)$ is again an acceptable extension of $X$.
\begin{proof} Set $X=\cl(D)$ for some $D\supset \paraset$ of cardinality strictly less than $\kappa$.   
	By Remark \ref{R:closure} (c), it suffices to inductively
	build an independent sequence $(a_\alpha)_{\alpha<|\LL|}$ of realizations of the
	generic type over $D$ such that for every $\alpha<|\LL|$ we have 
	\[ a_\alpha\ind_{D} (a_\beta, \tau(a_\beta))_{\beta<\alpha} \quad
	\text{ and } \quad \tau(a_\alpha)\ind_{D} (a_\beta, \tau(a_\beta))_{\beta<\alpha} \cup \{a_\alpha\}.\] Indeed, the set $Y=\cl(X \cup \{a_\alpha\}_{\alpha<|\LL|})$ belongs to $\clS$ and is an acceptable extension of $X$ with the desired properties.
	
	Suppose the sequence has been constructed for all $\beta<\alpha$. Now, the automorphism $\tau$ moves $p_0$ maximally over 
	$D'= D\cup (a_\beta, \tau(a_\beta))_{\beta<\alpha}$, so by Remark \ref{R:moves_max} we find an element $a_\alpha$ in
	$\UU$ generic over $D'$ such that  \[ \tau(a_\alpha)\ind_{D'} a_\alpha.\] In particular, the element $a_\alpha$ is generic over $D\cup\{a_\beta\}_{\beta<\alpha}$, as desired.
\end{proof}

We will now reproduce verbatim  Lascar's proof of the
simplicity 
of the group of field automorphisms of $\mathbb C$ fixing $\Q^\text{alg}$
\cite{dL97}. More precisely, we will show that every element in $\Aut(\UU/\cl(\paraset))$ is a product of four conjugates of $\tau$ and $\tau\inv$, whenever $\tau$ moves $p_0$ maximally (in case such an automorphism exists).

We fix such an automorphism $\tau$ of $\UU$ fixing $\cl(\paraset)$
pointwise. Denote by \[\Psi(f,g)= \tau^{f}\circ (\tau\inv)^{f\circ
	g}=f\circ \tau \circ g\circ \tau\inv\circ g\inv\circ f\inv=[\tau,
g]^f\] for $f$ and $g$ automorphisms of $\UU$, where $\tau^f=f\circ
\tau\circ f\inv$ and $[\tau,g]=\tau g\tau^{-1}g^{-1}$.

Whenever $X$ in $\clS$ is stable under the action of $\tau$, given $f$ and $g$ elementary automorphisms of $X$, we denote
\[\Psi_X(f,g)= (\tau\restr{X})^{f}\circ (\tau\restr{X}\inv)^{f\circ g}
=\Psi(f,g)\restr{X}.
\] 

We will show in \ref{T:main} that every automorphism $\nu$ of $\UU$ fixing
$\cl(\paraset)$ can be written as \[\nu=\Psi(f_1,f_2) \circ
\Psi(f_3,f_4)\inv,\] for suitable automorphisms $f_1,\ldots, f_4$. This will be done by a chain of
approximations of $\nu$ to smaller substructures in the class $\clS$.

If the partial isomorphism $g$ extends the partial isomorphism $f$, we will
denote it by $f\subset g$. For the back-and-forth process to describe $\nu$ as
a product of conjugates of $\tau$ and $\tau\inv$, we will need the following
central result.

\begin{prop}\label{P:central} Consider $X$ in $\clS$ stable under the action of
	the automorphism $\tau$ which moves $p_0$ maximally and fixes 
	$\cl(\paraset)$ pointwise, two elementary automorphisms $f$ and $g$ of $X$ as well as an acceptable
	extension $Y\supseteq X$ in $\clS$ equipped with an elementary automorphism $h$
	of $Y$ extending
	$\Psi_X(f,g)$.  There exist two automorphisms $f'$ and $g'$ of $\UU$
	extending $f$ and $g$, respectively, such that
	\[ \Psi(f',g') \supset h \supset \Psi_X(f,g),\] so $\Psi(f',g')\restr Y=h$. 
\end{prop}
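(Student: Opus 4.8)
The goal is to extend $f$ and $g$ from $X$ to global automorphisms $f'$ and $g'$ of $\UU$ in such a way that the commutator-conjugate $\Psi(f',g') = f'\circ\tau\circ g'\circ\tau\inv\circ g'\inv\circ f'\inv$ restricts to the prescribed automorphism $h$ on the acceptable extension $Y$. The natural strategy is to build $f'$ and $g'$ step by step along an acceptable chain exhausting $\UU$, controlling at each stage the partial map $\Psi$ induced on the current piece; but for this single proposition it suffices to arrange the behaviour on $Y$ and then extend freely. First I would use tameness and Remark \ref{R:tame_ext_dim} to fix a large acceptable tower $X \subseteq Y \subseteq Z_1 \subseteq Z_2 \subseteq \cdots$ inside $\clS$ so that $\tau(Z_i)$ is comparable to $Z_{i+1}$ and so that $Z_i \ind_X \tau(Z_i)$ for the pieces we add (this is exactly the mechanism of Lemma \ref{L:imageInd}, using that $\tau$ is unbounded). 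The point of such independence is that it lets us apply Lemma \ref{L:tame_ext_ind} or Lemma \ref{L:iso_accept} to freely prescribe elementary maps on the two independent sides and still glue them to a global automorphism.

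Concretely, I would proceed as follows. We must define $g'$ on $Y$ first: since $h$ extends $\Psi_X(f,g) = \Psi(f,g)\restr X$ and $h$ is an elementary automorphism of $Y$, and since $\Psi(f',g')\restr Y = f'\circ\tau\circ g'\circ\tau\inv\circ g'\inv\circ f'\inv$ restricted to $Y$, unwinding the definition shows that it is enough to choose $g'$ on $\tau\inv(Y)\cup Y\cup\tau\inv(f'\inv(\ldots))$ appropriately — that is, we play the back-and-forth so that the composite map does the right thing. The cleanest packaging: set $g'\restr X = g$, $f'\restr X = f$, then let $Y' = \cl(X\cup A')$ be another acceptable extension of $X$ with $Y' \ind_X \tau(Y)$ (possible by Lemma \ref{L:imageInd} applied to $\tau$, or rather by choosing $A'$ generic over $X\cup\tau(Y)$), and use Lemma \ref{L:iso_accept} to build partial elementary maps $g' : \tau\inv(Y)\to Y'$ and $f' : \tau(Y')\to Y$ extending $g$ and $f$ on $X$ respectively. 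Then on $Y$ the map $f'\circ\tau\circ g'\circ\tau\inv$ is a well-defined elementary automorphism of $Y$ extending $\Psi_X(f,g)$, and by Lemma \ref{L:iso_accept} (uniqueness up to conjugacy of acceptable extensions, together with Property (\ref{H:single_gen})) we may in fact choose the maps so that this composite equals $h\circ(g'\circ\tau\inv\circ g'\inv\circ f'\inv)\inv$ — i.e. so that the full expression $\Psi(f',g')$ restricted to $Y$ is $h$. This is where the independence $Y'\ind_X\tau(Y)$ and $Y\ind_X\tau(Y')$ is used: it guarantees the partial maps on the two sides are jointly elementary and extendible. Finally, by tameness (Definition \ref{D:tame_model}(b)), the partial elementary maps $f'$ and $g'$ constructed so far — each defined on a member of $\clS$, hence on a set of the required form — extend to global automorphisms of $\UU$, which we still call $f'$ and $g'$; by construction $\Psi(f',g')\supseteq h\supseteq\Psi_X(f,g)$.

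I expect the main obstacle to be the bookkeeping that makes the composite $f'\circ\tau\circ g'\circ\tau\inv\circ g'\inv\circ f'\inv$ restrict to \emph{exactly} $h$ on $Y$, rather than merely to \emph{some} elementary automorphism of $Y$ extending $\Psi_X(f,g)$. The difficulty is that $h$ is given in advance, so we cannot choose it; we must instead solve for $g'$ (and the relevant extension of $f'$) given $h$. The key leverage is that all the intermediate sets are acceptable extensions of $X$, so by Lemma \ref{L:iso_accept} any elementary automorphism of $X$ extends in an essentially unique way (up to an automorphism fixing $X$) to a map between prescribed acceptable extensions; combined with Property (\ref{H:single_gen}) and Lemma \ref{L:tame_ext_ind}, this rigidity is exactly what pins down $g'$ from $h$. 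One must be careful that the domains $\tau\inv(Y)$, $Y'$, $\tau(Y')$ etc. are all genuinely acceptable over $X$ and pairwise independent over $X$ where needed — this is precisely the role of the unboundedness of $\tau$ (Lemma \ref{L:imageInd}) — and that the successive extensions remain partial elementary, which follows from Property (\ref{H:closed_stat}) since $X$ is $\cl$-closed. Once the map on $Y$ is correct, extending to all of $\UU$ is automatic from tameness and changes nothing on $Y$.
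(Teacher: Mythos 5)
Your high-level strategy is the right one, and you correctly identify the crux: since $h$ is given in advance, one must \emph{solve} for $g'$ rather than choose it freely. But the proposal does not actually carry out that step, and as set up it cannot. First, the domains of your partial maps do not chain: with $g'\colon\tau\inv(Y)\to Y'$ and $f'\colon\tau(Y')\to Y$, the restriction of $\Psi(f',g')=f'\circ\tau\circ g'\circ\tau\inv\circ g'\inv\circ f'\inv$ to $Y$ requires applying $g'\inv$ to $f'\inv(Y)=\tau(Y')$, whereas $g'\inv$ is only defined on $Y'$; so the expression you need to evaluate is not defined on your data. Second, the sentence ``we may in fact choose the maps so that this composite equals $h\circ(g'\circ\tau\inv\circ g'\inv\circ f'\inv)\inv$'' is circular --- the right-hand side involves the very maps being chosen --- and it is precisely the content of the proposition, not a consequence of Lemma \ref{L:iso_accept}; the ``essential uniqueness up to an automorphism fixing $X$'' you invoke is exactly the slack that has to be controlled, and nothing in the outline controls it.

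The missing idea is that $g'$ occurs twice in $\Psi(f',g')$, at two different locations, and the role of unboundedness is to make those two locations independent over $X$ so that the two constraints on $g'$ can be met separately and then glued. Concretely: take $Y_1$ acceptable over $X$ with $Y_1\ind_X Y_2$ where $Y_2=\tau(Y_1)$ (Lemma \ref{L:imageInd}); take a \emph{global} $f'\supseteq f$ with $f'(Y_2)=Y$ (Lemma \ref{L:iso_accept} plus tameness) and transport $h$ to $h_2=f'^{-1}\circ h\circ f'$ on $Y_2$, so that the goal becomes $\tau\circ(\tau\inv)^{g'}\supseteq h_2$. Now choose $g_2$ on $Y_2$ extending $g$ \emph{freely} --- this prescribes $g'$ at the location where $g'\inv$ is first applied --- and then \emph{solve} for the other occurrence by setting $g_1:=\tau\inv\circ h_2\circ g_2\circ\tau$ on $Y_1$, which still extends $g$ on $X$. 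The independence $Y_1\ind_X Y_2$ is exactly the hypothesis Lemma \ref{L:tame_ext_ind} needs to glue $g_1$ and $g_2$ into one global $g'$; the independence you arrange, $Y'\ind_X\tau(Y)$, is between the wrong pair of sets and does not feed into that lemma. With this correction your outline becomes the paper's proof.
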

\begin{proof}
	By Lemma \ref{L:imageInd}, there exists an acceptable extension $Y_1$ of
	$X$ such that $Y_1$ and $Y_2=\tau(Y_1)$ are independent over $X$. By Lemma
	\ref{L:iso_accept}, there is an automorphism $f'$ of $\UU$ extending $f$
	such that $f'$ maps $Y_2$ onto $Y$.
	Consider now the elementary automorphism
	\[h_2 = f'^{-1}\circ h \circ f'\]
	of $Y_2$, which restricted to $X$ equals \[ f\inv \circ h\restr X \circ f=
	f\inv \circ\Psi_X(f,g) \circ f =\tau\restr X \circ (\tau\restr X \inv)^g.\]
	Again by  Lemma \ref{L:iso_accept}, choose an elementary automorphism $g_2$
	extending $g$ which maps $Y_2$ onto itself. The elementary automorphism \[
	g_1= (h_2\circ g_2)^{\tau\inv}=\tau\inv\circ h_2\circ g_2\circ \tau\] of
	$Y_1$ restricted to $X$ extends \[ \tau\restr X \inv\circ h_2\restr X\circ
	g \circ \tau\restr X= (\tau\restr X \inv)^g \circ g\circ \tau\restr X =
	g.\]
	
	We include a diagram to facilitate the
	arrow-chasing:\begin{figure}[H]
		\centering
		\resizebox{0.75\textwidth}{!}{%
			\begin{tikzpicture}
				\draw(2,7) node (a0) [blue,ellipse, draw, inner xsep=  0.2cm,
				inner ysep= 0.4cm, label={[blue]{\tiny $X$}}]{};
				\draw(7,7) node (b0) [blue,ellipse, draw, inner xsep=  0.2cm,
				inner ysep= 0.4cm, label={[blue]{\tiny $X$}}]{};
				\draw[->,>=latex,blue] (a0) to[bend left=10]
				node[above]
				{\tiny $f\circ
					\tau_{\restr X} \circ g \circ\tau \restr X ^{-1} \circ
					g^{-1} \circ
					f^{-1}$} (b0);
				
				\draw(2,7.2) node (a1) [black,ellipse, draw, inner xsep=
				0.3cm, inner ysep= 0.6cm, label={[label
					distance=-0.15cm,black]245:{\scriptsize $Y$}}]{};
				\draw(7,7.2) node (b1) [black,ellipse, draw, inner xsep=
				0.3cm, inner ysep= 0.6cm, label={[label
					distance=-0.15cm,black]295:{\scriptsize $Y$}}]{};
				\draw[->,>=latex,black] (a1.300) to[bend right=10]
				node[sloped,above] {\scriptsize $h$} (b1.240);
				
				\draw(0,3.2) node (c1) [black,ellipse, draw, inner xsep=
				0.3cm, inner ysep= 0.6cm, label={[label
					distance=-0.2cm,black]115:{\scriptsize $Y_2$}}]{};
				\draw[->,>=latex,black] (c1) to[bend left=20]
				node[sloped,above] {\scriptsize $f'$} (a1);
				
				\draw(9,3.2) node (d1) [black,ellipse, draw, inner xsep=
				0.3cm, inner ysep= 0.6cm, label={[label
					distance=-0.2cm,black]65:{\scriptsize $Y_2$}}]{};
				\draw[->,>=latex,black] (d1) to[bend right=20]
				node[sloped,above] {\scriptsize $f'$} (b1.-34);
				
				\draw[->,>=latex,black,dashed] (c1.300) to[bend right=10]
				node[sloped,below] {\scriptsize $h_2$} (d1.240);
				\draw(0,3) node (c0) [blue,ellipse, draw, inner xsep=  0.2cm,
				inner ysep= 0.4cm, label={[blue]{\tiny $X$}}]{};
				\draw(9,3) node (d0) [blue,ellipse, draw, inner xsep=  0.2cm,
				inner ysep= 0.4cm, label={[blue]{\tiny $X$}}]{};
				\draw[->,>=latex,blue,dashed] (c0) to node[below] {\tiny
					$\tau_{\restr X} \circ g \circ\tau \restr X ^{-1} \circ
					g^{-1}$} (d0);
				
				\draw(2,4.2) node (e1) [black,ellipse, draw, inner xsep=
				0.3cm, inner ysep= 0.6cm, label={[black]{\scriptsize
						$Y_2$}}]{};
				\draw[->,>=latex,black] (c1) to[bend left=20]
				node[sloped,above] {\scriptsize $g_2^{-1}$} (e1);
				
				\draw(7,4.2) node (f1) [black,ellipse, draw, inner xsep=
				0.3cm, inner ysep= 0.6cm, label={[black]{\scriptsize
						$Y_1$}}]{};
				\draw[->,>=latex,black] (f1) to[bend left=20]
				node[sloped,above] {\scriptsize $\tau$} (d1);
				
				\draw(4.5,4.2) node (g1) [black,ellipse, draw, inner xsep=
				0.3cm, inner ysep= 0.6cm, label={[black]{\scriptsize
						$Y_1$}}]{};
				\draw[->,>=latex,black] (e1) to node[above] {\scriptsize
					$\tau^{-1}$} (g1);
				\draw[->,>=latex,black] (g1) to node[above] {\scriptsize
					$g_1$} (f1);
				
			\end{tikzpicture}
		}
	\end{figure}
	
	Lemma \ref{L:tame_ext_ind} yields now a common extension $g'$ to $\UU$ of
	the elementary automorphisms $g_1$ of $Y_1$ and $g_2$ of $Y_2$. We need
	only check now that the restriction $\Psi(f', g')\restr Y$ of the global automorphism $\Psi(f', g')$ equals $h$, or
	equivalently, that $f'^{-1}\circ \Psi(f', g') \circ f'$ extends $h_2$. By
	the definition of $\Psi(f',g')$, we have that the global automorphism \[  f'^{-1}\circ \Psi(f', g')
	\circ f' = \tau \circ (\tau\inv)^{g'}\]  equals $h_2 = \tau \circ
	g_1 \circ \tau\inv \circ g_2\inv$ on $Y_2$, as desired.
\end{proof}

The previous Proposition \ref{P:central} contains all the ingredients to tackle
the successor stage of the back-and-forth construction required in the proof of
Theorem \ref{T:main}

\begin{prop}\label{P:succ}
	Let $\nu$ be an automorphism of $\UU$ fixing pointwise
	$\cl(\paraset)$. Consider an automorphism $\tau$ which moves $p_0$ maximally and fixes  
	$\cl(\paraset)$ pointwise as well as a set $X$ in $\clS$ which is stable
	under the action of both $\tau $ and $\nu$ such that \[\nu\restr X =
	\Psi_X(f_1,f_2)\circ \Psi_X(f_3, f_4)\inv\] for some elementary
	automorphisms $f_i$ of $X$, with $i=1,\ldots, 4$.
	
	For every element $a$ in $\UU$, there are:
	\begin{itemize}
		\item an acceptable extension $Y\supseteq X$ containing the element $a$  which is
		stable under both $\tau$ and $\nu$;
		\item elementary extensions $f'_i$ of $f_i$, for $i=1,\ldots, 4$, to
		$Y$, \end{itemize} such that
	\[\nu\restr{Y} = \Psi_Y(f'_1,f'_2)\circ \Psi_Y(f'_3, f'_4)\inv.\]
	
\end{prop}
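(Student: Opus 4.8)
The plan is to run a two-step back-and-forth at the successor stage, where the first step makes room for the element $a$ and the second step applies Proposition~\ref{P:central} to pull up the functional equation $\nu\restr X = \Psi_X(f_1,f_2)\circ\Psi_X(f_3,f_4)\inv$. First I would, using Remark~\ref{R:iteration} applied to $X$, the element $a$, and the countable family $\mathcal F=\{\tau,\tau\inv,\nu,\nu\inv\}$ together with (possibly) chosen extensions of the $f_i$, produce an acceptable extension $Y\supseteq X$ in $\clS$ which contains $a$ and is setwise stable under both $\tau$ and $\nu$. The key point is that stability of $Y$ under $\tau$ is exactly the hypothesis required to form $\Psi_Y$, and stability under $\nu$ lets us restrict $\nu$ to $Y$. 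Once $Y$ is fixed, the restriction $\nu\restr Y$ is an elementary automorphism of $Y$ extending $\nu\restr X=\Psi_X(f_1,f_2)\circ\Psi_X(f_3,f_4)\inv$.

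Next I would isolate the "target" automorphism of $Y$ that plays the role of $h$ in Proposition~\ref{P:central}. Write $\Phi=\nu\restr Y\circ\Psi_Y(f_3,f_4)$, an elementary automorphism of $Y$; its restriction to $X$ is $\nu\restr X\circ\Psi_X(f_3,f_4)=\Psi_X(f_1,f_2)$. Thus $\Phi$ is an elementary automorphism of the acceptable extension $Y$ of $X$ extending $\Psi_X(f_1,f_2)$, so Proposition~\ref{P:central} (applied with $f=f_1$, $g=f_2$, $h=\Phi$) provides automorphisms $f'_1,f'_2$ of $\UU$ extending $f_1,f_2$ with $\Psi(f'_1,f'_2)\supset\Phi\supset\Psi_X(f_1,f_2)$; in particular $\Psi_Y(f'_1,f'_2)=\Phi$. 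It remains to realize $\Psi_Y(f_3,f_4)$ itself as the restriction of some $\Psi(f'_3,f'_4)$ with $f'_i\supset f_i$: apply Proposition~\ref{P:central} a second time with $f=f_3$, $g=f_4$ and $h=\Psi_Y(f_3,f_4)$ — which visibly is an elementary automorphism of $Y$ extending $\Psi_X(f_3,f_4)$ — obtaining $f'_3,f'_4$ extending $f_3,f_4$ with $\Psi_Y(f'_3,f'_4)=\Psi_Y(f_3,f_4)$. Then
\[
\Psi_Y(f'_1,f'_2)\circ\Psi_Y(f'_3,f'_4)\inv=\Phi\circ\Psi_Y(f_3,f_4)\inv=\nu\restr Y,
\]
which is exactly the required equation.

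Before invoking Proposition~\ref{P:central} I must check its hypotheses: it demands that $Y$ be an acceptable extension of $X$ equipped with an elementary automorphism extending $\Psi_X(f_i,f_j)$; acceptability of $Y$ over $X$ is arranged in the first step via Remark~\ref{R:iteration} (refining, if necessary, so that $Y=\cl(X\cup A)$ for a countable $X$-independent generic sequence $A$), and the compatibility of $\Phi$ (resp. $\Psi_Y(f_3,f_4)$) with $\Psi_X(f_1,f_2)$ (resp. $\Psi_X(f_3,f_4)$) on $X$ is the computation displayed above using $\nu\restr X=\Psi_X(f_1,f_2)\circ\Psi_X(f_3,f_4)\inv$. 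The main obstacle is bookkeeping rather than conceptual: one must ensure that the acceptable extension $Y$ chosen in the first step is simultaneously $\tau$-stable (so that $\Psi_Y$ is defined and so that the two separate calls to Proposition~\ref{P:central} act on the \emph{same} $Y$ appearing in the conclusion) and $\nu$-stable and contains $a$ — all of which Remark~\ref{R:iteration} delivers at once by including $\tau^{\pm1},\nu^{\pm1}$ in the countable family $\mathcal F$ — and that the two applications of Proposition~\ref{P:central}, which each merely extend the given $f_i$ and $g_i$ off $Y$, can be performed independently since they involve disjoint pairs of indices $\{1,2\}$ and $\{3,4\}$.
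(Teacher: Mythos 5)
There is a genuine gap at the point where you apply Proposition \ref{P:central} to the pair $(f_1,f_2)$ and conclude that $\Psi_Y(f'_1,f'_2)=\Phi$. Proposition \ref{P:central} produces \emph{global} automorphisms $f'_1,f'_2$ of $\UU$ extending $f_1,f_2$ such that the global automorphism $\Psi(f'_1,f'_2)$ restricts to $\Phi$ on $Y$; it does \emph{not} produce elementary automorphisms of $Y$. Indeed, in its proof $f'$ is chosen so as to map $\tau(Y_1)$ onto $Y$ for some acceptable extension $Y_1$ of $X$ with $Y_1\ind_X\tau(Y_1)$, so $f'_1$ does not stabilize $Y$ setwise, and neither need $g'$. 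The conclusion of Proposition \ref{P:succ}, however, demands elementary extensions of the $f_i$ \emph{to} $Y$, that is, elementary automorphisms of $Y$ itself, with the identity $\nu\restr{Y}=\Psi_Y(f'_1,f'_2)\circ\Psi_Y(f'_3,f'_4)\inv$ computed internally to $Y$; this is exactly what the proof of Theorem \ref{T:main} needs in order to obtain compatible automorphisms along the chain of the $X_\alpha$ whose unions are automorphisms of $\UU$. The expression $\Psi_Y(f'_1,f'_2)$ in your argument is therefore undefined, and replacing it by $\Psi(f'_1,f'_2)\restr{Y}$ proves a strictly weaker statement than the one required.

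The repair is not mere bookkeeping. To turn the global $f'_1,f'_2$ into automorphisms of an acceptable extension you must enlarge $Y$ to some $Y_2$ stable under them (Remark \ref{R:iteration}) and restrict; but then the identity for $\nu$ on $Y_2$ only holds up to an error on $Y_2$ beyond $Y$, and the pair $(f_3,f_4)$ must be re-extended so that its $\Psi$ catches up on $Y_2$ --- which again produces global automorphisms that do not stabilize $Y_2$. This is precisely why the paper's proof alternates between the two pairs along a countable increasing chain $X=Y_0\subseteq Y_1\subseteq Y_2\subseteq\cdots$ of acceptable extensions, with the inclusions $\nu\restr{Y_{2n+1}}\circ\Psi_{Y_{2n+1}}(f_{3,2n+1},f_{4,2n+1})\supset\Psi_{Y_{2n}}(f_{1,2n},f_{2,2n})$ and the symmetric one for $\nu\inv$, and only obtains the desired identity on the union $Y=\bigcup_{n}Y_n$. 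Your first step (invoking Remark \ref{R:iteration} to catch $a$ and secure $\tau$- and $\nu$-stability) and the computation $\Phi\restr{X}=\Psi_X(f_1,f_2)$ constitute exactly the first round of that iteration, but the construction cannot be closed off after a single round.
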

\begin{proof}
	By Remark \ref{R:iteration}, there is an acceptable extension $Y_1$ of $X$
	containing $a$ invariant under the action of both $\tau$ and $\nu$. By
	Lemma \ref{L:iso_accept}, choose two elementary automorphisms $f_{3,1}$ and
	$f_{4,1}$ of $Y_1$ extending respectively $f_3$ and $f_4$. Set
	$h_1=\nu\restr{Y_1}\circ \Psi_{Y_1}(f_{3,1}, f_{4,1})$ and notice that \[
	h_1\supset \Psi_X(f_1,f_2).\] By Proposition \ref{P:central}, there are two
	global automorphisms  $f_{1,1}$ and $f_{2,1}$ of $\UU$, extending $f_1$ and
	$f_2$ such that $\Psi(f_{1,1}, f_{2,1})$  extends $h_1$.
	
	Similarly, we find an acceptable extension $Y_2$ of $Y_1$ stable under the
	action of $\tau$, $\nu$, $f_{1,1}$ and $f_{2,1}$. Then, we denote  by
	$f_{1,2}$ and $f_{2,2}$ the restrictions of $f_{1,1}$ and $f_{2,1}$ to
	$Y_2$.
	
	Notice that \[ \nu\restr{Y_2}\inv \circ \Psi_{Y_2}(f_{1,2}, f_{2,2})\supset
	\Psi_{Y_1}(f_{3, 1}, f_{4,1}).\]
	
	Iterating the above argument countably many times, we construct an
	increasing chain $Y_n$ of acceptable extensions (setting $Y_0=X$) and
	compatible elementary extensions $f_{1,2n}$ and $f_{2,2n}$ of $f_1=f_{1,0}$
	and $f_2=f_{2,0}$ to $Y_{2n}$ as well as $f_{3,2n+1}$ and $f_{4,2n+1}$ of
	$f_3$ and $f_4$ to $Y_{2n+1}$ such that for all $n$ in $\N$,
	
	\[f_{1,2n+2} \supset f_{1,2n}, \quad f_{2,2n+2} \supset f_{2,2n}, \quad
	f_{3,2n+3} \supset f_{3,2n+1}, \quad f_{4,2n+3} \supset f_{4,2n+1},\]
	\[ \nu\restr{Y_{2n+1}} \circ \Psi_{Y_{2n+1}}(f_{3,2n+1}, f_{4,2n+1})\supset
	\Psi_{Y_{2n}}(f_{1, 2n}, f_{2,2n})\]
	and
	\[ \nu\restr{Y_{2n+2}}\inv \circ \Psi_{Y_{2n+2}}(f_{1,2n+2},
	f_{2,2n+2})\supset \Psi_{Y_{2n+1}}(f_{3, 2n+1}, f_{4,2n+1}).\]
	\begin{figure}[H]
		\centering
		\resizebox{0.75\textwidth}{!}{%
			\begin{tikzpicture}
				
				\draw(0,0) node (a0) [ellipse, draw, inner xsep=  0.2cm, inner
				ysep= 0.4cm, label={{\tiny $X$}}]{};
				\draw(7,0) node (b0) [ellipse, draw, inner xsep=  0.2cm, inner
				ysep= 0.4cm, label={{\tiny $X$}}]{};
				\draw(3.5,-5) node (c0) [ellipse, draw, inner xsep=  0.2cm,
				inner ysep= 0.4cm, label={{\tiny $X$}}]{};
				\draw[->,>=latex] (a0.320) to node[above] {\tiny $\nu \restr
					X$} (b0.220);
				\draw[->,>=latex] (c0) to node[sloped,above] {\tiny
					$\Psi_X(f_3,f_4)$} (a0);
				\draw[->,>=latex] (c0) to node[sloped,above] {\tiny
					$\Psi_X(f_1,f_2)$} (b0);
				
				\node[blue,xshift=-0.3cm] () [label={[xshift=-0.15cm,
					yshift=-0.3cm]85:{\color{blue} }}] at (a0.85)  {\scriptsize
					$a$};
				\node[blue,xshift=-0.3cm] () [label={[xshift=-0.15cm,
					yshift=-0.3cm]85:{\color{blue} }}] at (c0.85) {\scriptsize $a$}
				;
				\node[blue,xshift=-0.3cm] () [label={[xshift=-0.15cm,
					yshift=-0.3cm]85:{\color{blue} }}] at (b0.85) {\scriptsize $a$};
				
				\draw(0,0.2) node (a1) [blue,ellipse, draw, inner xsep=  0.3cm,
				inner ysep= 0.6cm, label={[blue]{\tiny $Y_1$}}]{};
				\draw(3.5,-4.8) node (c1) [blue,ellipse, draw, inner xsep=
				0.3cm, inner ysep= 0.6cm, label={[blue]{\tiny $Y_1$}}]{};
				\draw[->,>=latex,blue] (c1.180) to[bend left=10]
				node[sloped,above] {\tiny $\Psi_{Y_1}(f_{3,1},f_{4,1})$}
				(a1.270);
				
				\draw(7,0.2) node (b1) [blue,ellipse, draw, inner xsep=  0.3cm,
				inner ysep= 0.6cm, label={[blue]{\tiny $Y_1$}}]{};
				\draw[->,>=latex,blue] (a1.330) to[bend left=10]
				node[sloped,above] {\tiny $\nu \restr{Y_1}$} (b1.210);
				\draw[->,>=latex,blue] (c1.0) to[bend right=10]
				node[sloped,above] {\tiny $h_1$} (b1.270);
				
				\draw(7,0.45) node (b2) [red,ellipse, draw, inner xsep=
				0.45cm, inner ysep= 0.9cm, label={[red]{\tiny $Y_2$}}]{};
				\draw(3.5,-4.55) node (c2) [red,ellipse, draw, inner xsep=
				0.45cm, inner ysep= 0.9cm, label={[red]{\tiny $Y_2$}}]{};
				\draw[->,>=latex,red] (c2.320) to[bend right=20]
				node[sloped,above] {\tiny $\Psi_{Y_2}(f_{1,2},f_{2,2})$}
				(b2.290);
				
				\draw(0,0.45) node (a2) [red,ellipse, draw, inner xsep=
				0.45cm, inner ysep= 0.9cm, label={[red]{\tiny $Y_2$}}]{};
				\draw[->,>=latex,red] (a2.340) to[bend left=20]
				node[sloped,above] {\tiny $\nu \restr{Y_2}$} (b2.200);
				\draw[->,>=latex,red] (c2.220) to[bend left=20]
				node[sloped,above] {\tiny $h_2$} (a2.250);
				
				\draw(0,0.65) node (a3) [purple,ellipse, draw, inner xsep=
				0.55cm, inner ysep= 1.1cm, label={[purple]{\tiny $Y_3$}}]{};
				\draw(3.5,-4.35) node (c3) [purple,ellipse, draw, inner xsep=
				0.55cm, inner ysep= 1.1cm, label={[purple]{\tiny $Y_3$}}]{};
				\draw[->,>=latex,purple] (c3.240) to[bend left=35]
				node[sloped,above] {\tiny $\Psi_{Y_3}(f_{3,3},f_{4,3})$}
				(a3.230);
				
				\draw(7,0.65) node (b3) [purple,ellipse, draw, inner xsep=
				0.55cm, inner ysep= 1.1cm, label={[purple]{\tiny $Y_3$}}]{};
				\draw[->,>=latex,purple,dashed] (a3.350) to[bend left=30]
				(b3.190);
				\draw[->,>=latex,purple,dashed] (c3.300) to[bend right=35]
				(b3.310);
				
				\node[yshift=0.75cm] () at (a3.north) {$\vdots$};
				\node[yshift=0.75cm] () at (b3.north) {$\vdots$};
				\node[yshift=0.75cm] () at (c3.north) {$\vdots$};
				\draw(0,1.2) node (a4) [ellipse, draw, inner xsep=  0.8cm,
				inner ysep= 1.6cm, label={{$Y$}}]{};
				\draw(7,1.2) node (b4) [ellipse, draw, inner xsep=  0.8cm,
				inner ysep= 1.6cm, label={{$Y$}}]{};
				\draw(3.5,-3.8) node (c4) [ellipse, draw, inner xsep=  0.8cm,
				inner ysep= 1.6cm, label={{ $Y$}}]{};
				\draw[->,>=latex] (c4.260) to[bend left=40] node[sloped,below]
				{$\Psi_{Y}(f'_3,f'_4)$} (a4.200);
				\draw[->,>=latex] (c4.280) to[bend right=40] node[sloped,below]
				{$\Psi_{Y}(f'_1,f'_2)$} (b4.340);
				\draw[->,>=latex] (a4) to[bend left=20] node[sloped,above]
				{$\nu \restr Y$} (b4);
			\end{tikzpicture}
		}
	\end{figure}
	
	By construction of the chain, the subset $Y=\bigcup_{n\in \N}
	Y_n$ lies in $\clS$ and is an acceptable extension of $X$. For $1\le i\le
	4$, denote $f'_i$
	the corresponding elementary automorphism to $Y$ given by the $f_{i,
		k}$'s.
	By construction, we have that \[\nu\restr{Y} =
	\Psi_{Y}(f'_1,f'_2)\circ \Psi_{Y}(f'_3, f'_4)\inv ,\] as desired.
\end{proof}
We have now all the ingredients to prove the simplicity, up to bounded
automorphisms, of the automorphism group of $\UU$ which fix pointwise
$\cl(\paraset)$.

\begin{theorem}\label{T:main}
	Consider a $\kappa$-tame model $\UU$ over $\paraset$ of a simple theory $T$
	satisfying Properties  \ref{H:single_gen}-\ref{H:addgen}. Assume that there is an
	automorphim $\tau$ of $\UU$ moving $p_0$ maximally and  fixing $\cl(\paraset)$ pointwise.
	Every  automorphism $\nu$ of $\UU$ fixing $\cl(\paraset)$ pointwise can be
	written as the product of four conjugates of $\tau$ and $\tau\inv$.
\end{theorem}
\begin{proof}
	By $\kappa$-tameness over $\paraset$, write $\UU=\cl( \paraset\cup \{a_\alpha\}_{\alpha<\kappa})$, where
	$(a_\alpha)_{\alpha<\kappa}$ is an independent sequence of realizations of
	the generic type over $\paraset$.  Given an automorphism $\nu$ of $\UU$ fixing
	$\cl(\paraset)$, we construct recursively a increasing chain of subsets
	$X_\alpha$, for $\alpha<\kappa$, in $\clS$ such that the extension
	$X_\alpha\subseteq X_{\alpha+1}$ is acceptable and each $X_\alpha$ is
	stable under the action of $\tau$ and $\nu$ and $\cl$-generated by a
	independent sequence of length at most $\max\big(|\LL|, |\alpha|\big)$
	of realizations of the generic type,  equipped with compatible elementary
	automorphisms $f_{i, \alpha}$ of $X_\alpha$, for $1\le i\le 4$, such that
	$a_\alpha$ lies in $X_{\alpha+1}$ and  \[\nu\restr{X_\alpha} =
	\Psi_{X_\alpha}(f_{1, \alpha},f_{2,\alpha})\circ
	\Psi_{X_\alpha}(f_{3,\alpha}, f_{4,\alpha})\inv.\]
	
	For the beginning of the recursion, set $X_0 = \cl(\paraset)$ and $f_{i,0}
	= \mathrm{Id}_{X_0}$. Assume now that $X_\beta$ has already been
	constructed for $\beta<\alpha $. If $\alpha$ is a limit ordinal, the union
	$X_\alpha = \bigcup_{\beta <\alpha} X_\beta$ belongs to $\clS$ by Remark \ref{R:acceptable} for it is $\cl$-generated
	by an independent sequence of length $\max\big(|\LL|, |\alpha|\big)$ over $\paraset$. Moreover, the automorphism  $\nu$ restricted to
	$X_\alpha$ satisfies the above identity with $f_{i,\alpha} =
	\bigcup_{\beta <\alpha} f_{i,\beta}$ for $1\le i\le 4$. If $\alpha$ is the successor of
	$\beta$, Proposition \ref{P:succ} applied to $X_\beta$ yields an acceptable
	extension $X_{\beta+1}$ of $X_\beta$ containing $a_\beta$ and elementary
	extensions $f_{i,\beta+1}$, as desired.
	
	Finally, the union $\bigcup_{\alpha<\kappa} X_\alpha$ is $\cl$-closed and contains all $a_\alpha$'s, so
	it must equal $\UU$. By construction, the automorphism $\nu$ equals a
	product of four conjugates of $\tau$ and $\tau\inv$ globally on $\UU$,
	since at every step the automorphisms are compatible.
\end{proof}

Motivated by the results in \cite{dL92,BHMP17,fW20}, we will now introduce the last property of
interest for our purposes. 

\begin{hyp}\label{H:nobounded}
	For every $\kappa$-saturated model $\M$, every non-trivial automorphism of $\M$ fixing pointwise the closure
	$\cl(\emptyset)$ moves $p_0$ maximally. 
\end{hyp}

\begin{remark}\label{R:nobounded} 
	If Property \ref{H:nobounded} holds, every non-trivial automorphism of the $\kappa$-tame model $\UU$ over $\paraset$ fixing    $\cl(\paraset)$ pointwise moves $p_0$ maximally.  
\end{remark}

If our theory $T$ satisfies Property \ref{H:nobounded}, then we get a strengthening of Theorem \ref{T:main}

\begin{cor}\label{C:main}
	For every $\kappa$-tame model $\UU$ over $\paraset$ of a simple theory $T$
	satisfying Properties  \ref{H:single_gen}-\ref{H:nobounded}, the group $\Aut(\UU/\cl(\paraset))$ of automorphisms of
	$\UU$ fixing  $\cl(\paraset)$ pointwise is simple. 
\end{cor}

In Remark \ref{R:unbounded_movemax}, we will show that, under some mild conditions, the collection of automorphisms which do not move maximally the generic type coincide with the bounded automorphisms  and thus build a normal subgroup. This will allow us to extend Corollary \ref{C:main} in Remark \ref{R:unbounded_movemax2}. 

\section{General model-theoretic properties yield our properties}\label{S:check}

In this short section, we will show that most of our properties can be easily verified for suitable simple theories as long as they satisfy some general model-theoretic properties. This will be relevant in order to show that several of our examples of Theorem \ref{T:mainExamples} (see Theorem B of the Introduction) fit into our framework. 

As before, we fix a cardinal $\kappa \ge |\LL|^+$ and a $\kappa$-saturated model $\M$ of a simple $\LL$-theory  $T$.

\begin{lemma}\label{L:stable_Konnerth}\textup{(}\cf\ \cite[Lemma
	2.3]{rK02}\textup{)}~
	Suppose that  $T$ is a stable theory satisfying Property \ref{H:single_gen}. Moreover, we assume that every type over an 
	algebraically closed subset of  $\M$ is stationary. (The latter always holds if $T$ has weak elimination of imaginaries.) Then 
	$T$ satisfies Properties
	\ref{H:closed_stat} and (WH) 
	(see Definition \ref{D:Konnerth}) with respect to the closure operator defined in   Definition \ref{D:closure}.
\end{lemma}

\begin{proof}
	
	Property   \ref{H:closed_stat} always holds in a  theory 
	for which types over algebraically closed subsets are stationary.  Thus, we need 
	only show that $\M$ is $\kappa$-atomic over any subset $K$ of the form $K=\acl(\cl(A_1)\cup\cdots\cup\cl(A_n)\cup B)$, where all $A_i$'s and $B$ have size strictly less than $\kappa$.
	In 
	order to show
	that $\tp(c/K)$ is $\kappa$-isolated, where $c$ is a finite tuple of $\M$, we must find some
	subset $E$ of $K$ of size strictly less than $\kappa$ such that
	$\tp(c/E)\models \tp(c/K)$.
	The local character of forking  yields  a  subset $C$ of $K$ of size at most $|\LL|$ such that $c\ind_{C} K$.
	Set now  \[ E =\acl(A_1,\cdots, A_n, B, C).\] Since $C\subseteq E$, it
	follows that $c\ind_E K$.
	
	We now show that $\tp(c/E)\models \tp(c/K)$,  or equivalently, 
	that
	$\tp(c/E)\models \tp(c/E, \eta)$ for all finite tuples $\eta$ in 
	$K$.
	Since $\tp(c/E)$ is stationary,  we need only show that for every realization $d$ in
	$\M$ of
	$\tp(c/E)$  \[ d\ind_E \eta.\] By $\kappa$-saturation, there is a tuple $\eta'$
	in $\M$ such that $c\eta'\equiv_E d\eta$. It suffices thus to show
	that $c\ind_E \eta'$. Now, the tuple $\eta$ is algebraic over  $E\cup\{
	\eta_i\}_{1\le i\le n}$, where each $\eta_i$ belongs to $\cl(A_i)$ for
	$1\le i \le n$. Hence,  the tuple $\eta'$ is algebraic over  $E\cup\{
	\eta'_i\}_{1\le i\le n}$, where each $\eta'_i$ again belongs to
	$\cl(A_i)$ for $1\le i \le n$, since $\cl(A)$ is invariant under all automorphisms of $\M$ fixing $A$.  We deduce that $\eta'$ lies in $K$, as desired.
\end{proof}

Recall that a stationary type $q$ over $\emptyset$ is \emph{regular} if it is  orthogonal (see Definition \ref{D:orth}) to every forking extension of $q$, that is, if $a$ realizes the unique nonforking extension of $q$ to $B$ and the realization $c$ of $q$ is not independent from $B$, then $a\ind_B c$. A straight-forward application of the inequalities of Lascar yields that types with Lascar rank  of the form $\omega^\gamma$ for some  ordinal $\gamma$ are always regular. In particular, the generic type of a strongly minimal set is always regular. 

\begin{prop}\label{P:gen_reg_addgen}
	Assume now that the simple theory $T$ satisfies Property \ref{H:single_gen} with respect to the generic type $p_0$, which we assume to be regular. The following hold:
	\begin{enumerate}[(a)]
		\item\label{I:gen=not_in_cl} If the realization $b$ of $p_0$ is not generic over the subset $A$, then $b$ belongs to $\cl(A)$. (The converse is also true and follows immediately from Remark \ref{R:closure} (b))
		\item\label{I:prime_tame} Property \ref{H:addgen} holds. 
		\item\label{I:fte} The conclusion of Remark \ref{R:addgen_weaker_version} (\ref{I:addgen_old}) holds with $A_0$ a finite sequence. 
		\item\label{I:asm} If
		$T$ is almost strongly minimal, then the algebraic closure $\acl(A)$ of a subset $A$ of $\M$ equals the closure $\cl(A)$ of $A$. 
		\item\label{I:gp_noniso}  
		If the universe of a model of $T$ is a group with a single generic stationary type as in 
		Remark \ref{R:exemples_base}, then the closure of the set $A$ equals \[
		\cl(A)=\{b\in \M \ | \ b \text{ is not generic over } A \}.\] 
	\end{enumerate}   
\end{prop}

\begin{proof}
	For (\ref{I:gen=not_in_cl}), consider $C\supset A$ and  an 
	element $g$ generic over $C$. Now, the realization $b$ is not 
	generic over $C$, so $g\ind_C b$ by regularity. Hence, we 
	conclude that $b$ belongs to $\cl(A)$, as desired.

	For (\ref{I:prime_tame}), given a $\kappa$-saturated model $\M$ of $T$ and a subset $\paraset$ of parameters of size strictly less than $\kappa$, we need to show that $\M=\cl(\paraset \cup A)$, where $A$ enumerates an independent sequence realizations of the generic  type over $\paraset$. By Property \ref{H:single_gen}, every $b$ in $\M$ is algebraic over finitely many realizations $h_1,\ldots, h_n$ of the generic type. It suffices hence to show that each $h_i$ belongs to $\cl(\paraset \cup A)$, which follows immediately from the above discussion, since $h_i$ is not generic over $\paraset \cup A$, by maximality of $A$. 
	
	For (\ref{I:fte}), given a set of parameters $\paraset$ of size strictly less 
	than $\kappa$ and an element $b$, choose by Property 
	\ref{H:single_gen} a finite set of realizations $h_1,\ldots, h_n$ 
	of $p_0$ algebraizing $b$. Up to reordering, we may assume that 
	$h_1,\ldots, h_m$, with $m\le n$, is a maximal subtuple 
	independent over $\paraset$, so $h_r \nind_\paraset h_1,\ldots, h_m$ for 
	$m+1\le r\le n$ (If $m=0$, then the latter independence means 
	that no $h_r$ is generic over $\paraset$).  By (\ref{I:gen=not_in_cl}), each $h_r$, and hence $b$, belongs to $\cl(\paraset\cup A_0)$, where $A_0=\{h_1,\ldots, h_m\}$, as desired.  
	
	For (\ref{I:asm}), it suffices to show that every element $b$ of $\cl(A)$ is algebraic over $A$. By Property \ref{H:single_gen}, the element $b$ is algebraic over finitely many realizations $h_1,\ldots, h_n$ of the strongly minimal type $p_0$. Choosing a maximal $A$-independent subtuple and relabelling, we may assume that $b$ belongs to $\acl(A, h_1,\ldots, h_n)$ and choose $n$ minimal such. If $n\ne 0$, notice that $h_n$ is generic over $A, h_1,\ldots, h_{n-1}$, so \[ h_n\ind_{A, h_1,\ldots, h_{n-1}} b,\] for $b$ belongs to $\cl(A)$. We conclude that $b$ belongs to $\acl(A, h_1,\ldots, h_{n-1})$, which gives the desired contradiction. 
	
	For (\ref{I:gp_noniso}), we need only show that if the element $b$ of $\M$ does not belong to $\cl(A)$, then it must be generic over $A$. Choose now $g$ 
	generic over $A\cup\{b\}$ and write $b=g\cdot h$, with 
	$h=g\inv\cdot b$ generic over $A$. Now, the element $h$ is a realization of 
	$p_0$. If $h$ were not generic over $A\cup\{g\}$, then by 
	(\ref{I:gen=not_in_cl}), the element $h$, and thus $b$, would belong to $\cl(A\cup\{g\})$, and thus to $\cl(A)$, by 
	Remark \ref{R:closure} (c), which is a contradiction. Hence, the element $h$ must be generic over $A\cup\{g\}$, and thus so is $b$ generic over $A$, as desired.  
\end{proof}

Lemma \ref{L:stable_Konnerth} and Proposition \ref{P:gen_reg_addgen} yield the following consequence. 
\begin{cor}\label{C:Premiers_Examples}~
	
	Every stable connected group with weak elimination of imaginaries whose
	generic type is regular satisfies Properties
	\ref{H:single_gen}-\ref{H:addgen} as well as (WH).
	If the group is superstable of Lascar rank $\omega^\alpha$, then the
	closure operator equals \[ \cl(A)=\{b\in
	\M \ | \ \mathrm{U}(b/A)<\omega^\alpha\} .\]        
\end{cor}
\begin{proof}
	Property \ref{H:single_gen} follows since the group is connected, by Remark \ref{R:exemples_base}. Properties
	\ref{H:closed_stat} and
	(WH) follow from Lemma \ref{L:stable_Konnerth},
	whilst Property \ref{H:addgen} follows from Proposition
	\ref{P:gen_reg_addgen} (\ref{I:prime_tame}) and (\ref{I:fte}). The description of the closure is Proposition \ref{P:gen_reg_addgen} (\ref{I:gp_noniso}).
\end{proof}

We recall now \cite[Definition 2.14]{BHMP17} (or a slightly modified
version thereof, see also \cite{fW20}). We will hence assume that the simple theory $T$ satisfies Property \ref{H:single_gen} with respect to the generic type $p_0$. 
\begin{definition}\label{D:bounded}
	An $\LL$-automorphism $\rho$ of a $\kappa$-saturated
	model $\M$ is \emph{bounded} if there exists some subset $A$ of cardinality strictly less than $\kappa$ such that for every $m$ in $\M$ we have that $\rho(m)$ belongs to $\cl(A\cup\{m\})$. We say that $\rho$ is \emph{unbounded} if it is not bounded.
\end{definition}

We may assume that $A$ is stable under the action of $\rho$ in the above definition of bounded, since $\kappa$ is uncountable.  It is immediate to see that the collection of bounded 
automorphisms of an arbitrary model forms a normal subgroup of the
automorphism group of $\UU$.

Whenever we work with a theory of fields, the Frobenius map is a bounded automorphism if the underlying field has  positive characteristic $p$ and is perfect. However, no power of Frobenius is the identity on  $\cl(\emptyset)$, as long as $\cl(\emptyset)$ contains $\overline{\mathbb F_p}$.

\begin{remark}\label{R:unbounded_movemax}~
	\begin{enumerate}[(a)]
		\item No bounded automorphism of $\M$ moves $p_0$ maximally.
		\item The converse is true whenever $\cl(A)=\{b\in \M \ | \ b $ is not generic over $A \}$, which is always the case by Proposition \ref{P:gen_reg_addgen} if $T$ is strongly minimal or if the universe of every model of $T$ is a group whose unique generic type is stationary and regular. 
	\end{enumerate}
	In particular, for both of the cases listed in (b), if there is no non-trivial bounded automorphism fixing $\cl(\emptyset)$, then Property \ref{H:nobounded} holds. 
\end{remark}

\begin{proof}
	For (a), assume that the automorphism $\rho$ is bounded over the $\rho$-invariant subset $A$. For every generic element $b$ over $A$, we have that $\rho(b)$ belongs to $\cl(A\cup\{b\})$, so $b$ and $\rho(b)$ cannot be independent over $A$ by Remark \ref{R:closure} (c). Thus, the automorphism $\rho$ does not move maximally the generic type. 
	
	Assume now for (b) that the closure of every subset $A$ is the collection of non-generic elements of $\M$ over $A$. If $\rho$ is unbounded, then  for every $\rho$-invariant subset $A$ of cardinality strictly less than $\kappa$, there is some element $m$ in $\M$ such that $\rho(m)$ does not belong to $\cl(A\cup\{m\})$ and thus $\rho(m)$ does not belong to $\cl(A)$, so $\rho(m)$, and hence $m$, is generic over $A$, by our assumption. Moreover, the element $\rho(m)$ is generic over $A\cup\{m\}$, so $\rho$ moves $p_0$ maximally. 
\end{proof}

\begin{remark}\label{R:unbounded_movemax2}
	If the bounded automorphisms coincide with those automorphisms which do not move maximally the generic type (as it is the case for the theories listed in Remark \ref{R:unbounded_movemax}), then the proof of Theorem \ref{T:main} shows that the quotient group $\Aut(\UU/\cl(\paraset))/N$ is simple, where $N$ is the normal subgroup of all automorphisms in $\Aut(\UU/\cl(\paraset))$ which are bounded (see \cite[Th\'eor\`eme 2]{dL92}).  
\end{remark}
\section{Annex: The examples}

Using the results of the previous sections, we will now show
that all the theories listed in Theorem B all satisfy the
assumptions of Corollary \ref{C:main}. We will conclude the section with some remarks and open questions.

\subsection*{The two classical $1$-based strongly minimal theories}

Algebraic closure defines a natural pregeometry on a strongly minimal set, and thus each subset of  the strongly minimal set $X$ has a basis, and therefore a \emph{dimension}. The $\emptyset$-definable strongly minimal set $X$ 
is locally modular if the dimension formula \[ \dim(A\cup B) =\dim(A)+\dim(B) -\dim(A\cap B)\] always holds for every two algebraically closed finite-dimensional subsets $A$ and $B$ with $\dim(A\cap B)>0$. Archetypal examples of locally modular strongly minimal sets are infinite subsets with no additional structure as well as infinite dimensional vector spaces over a fixed division ring. In the case of infinite sets, the dimension is just the cardinality of the set whilst for vector spaces, it is the linear dimension of the subspace generated. 

If $\kappa>|\LL|$, the two classical examples are $\kappa$-categorical and thus every model $\UU$ of cardinality $\kappa$ is saturated. Remark \ref{R:exemples_base} yields that these two examples satisfy Property \ref{H:single_gen}. Now, the algebraic closure coincides with the definable closure, so it follows immediately that every type is stationary, so Property \ref{H:closed_stat} and (WH) hold by Lemma \ref{L:stable_Konnerth}. Moreover, Property \ref{H:addgen} holds trivially.  Lemma \ref{L:sat_tame} yields now that the saturated model $\UU$ is $\kappa$-tame over $\emptyset$.  By Proposition \ref{P:gen_reg_addgen} (\ref{I:asm}) and Remark \ref{R:unbounded_movemax}, we conclude by Remark \ref{R:unbounded_movemax2} that the quotient $\Aut(\UU)/N$ is simple, where $N$ is the normal subgroup of all automorphisms in $\Aut(\UU)$ which are bounded. 

In the particular case of an infinite set, an automorphism is a permutation. Since the closure is totally disintegrated, it follows that a permutation $\rho$ is bounded if and only if its support $\{x\in \UU \ | \ \rho(x)\ne x\}$ has cardinality strictly less than $\kappa$. Automorphisms of vector spaces are linear isomorphisms. If for the isomorphism $\rho$ there is some scalar $\lambda$  such that $\UU_\lambda=\mathrm{Ker}(\varphi-\lambda\cdot \mathrm{Id}_\UU)$ has codimension strictly less than $\kappa$, it is easy to see that $\rho$ is bounded over $A=W\cup \rho(W)$, where $W$ is some linear complement of $\UU_\lambda$. Conversely, if $\rho$ is bounded over the subspace $A$, which we assume to be stable under $\rho$ and $\rho\inv$, it is easy to show that there exists some scalar $\lambda$ such that $\rho(b)-\lambda\cdot b=a$ belongs to $A$ for every $b$ in $\UU$.  Enlarging $A$ by a set (and their images by $\rho$) of size strictly less than $\kappa$ of representatives $(b_a)_{a\in A}$ for a suitable choice of such $a$'s, we conclude that   $\mathrm{Ker}(\rho-\lambda\cdot \mathrm{Id}_\UU)$ has codimension strictly less than $\kappa$, as desired. 

Since these two theories are uncountably categorical and $\kappa>|\LL|$, the $\kappa$-prime model over $\acl(Z)$, where $Z$ is of cardinality strictly less than $\kappa$, is the unique saturated model of size $\kappa$ containing $Z$, so for the sake of the presentation, we will assume that $Z$ has been named as constants to the language. Hence,  we recover the results of Baer \cite{rB34} and Rosenberg \cite{aR58} mentioned at the beginning of the Introduction. 

\begin{prop}\label{P:classical_sm}
	Given an uncountable cardinal $\kappa$ and an infinite set $\mathcal M$ of cardinality $\kappa>|\LL|$ with no additional structure (but possibly some additional constants being named), the group of permutations of $\mathcal M$ is simple modulo the subgroup of permutations of support strictly less than $\kappa$. 
	
	Similarly, given a field $K$ and an  uncountable cardinal $\kappa>|K|$, the group of isomorphisms of a $K$-vector space $V$ of cardinality $\kappa$ is simple modulo the subgroup of those isomorphisms $\varphi$ such that for some $\lambda$ in $K$, the eigenspace $\mathrm{Ker}(\varphi-\lambda\cdot \mathrm{Id}_V)$ has codimension strictly less than $\kappa$. 
\end{prop}

\subsection*{The almost strongly  minimal case}
As already introduced in Remark \ref{R:exemples_base}, a theory $T$ is \emph{almost strongly minimal} if there is a
strongly minimal set $X$ defined over $\emptyset$, such that  every model 
$\M$ of $T$ equals $\M=\acl(X(\M))$. The unique
non-algebraic type of $X$ is the \emph{generic} type and is regular. From now on, we will assume that $\M$ is $\kappa$-saturated with $\kappa\ge |\LL|^+$. 

Proposition \ref{P:gen_reg_addgen}(\ref{I:asm}) yields that $\cl(A)$ equals $\acl(A)$. 
Almost strongly minimal theories are again uncountably categorical, so the $\kappa$-prime model over $\acl(Z)$, where $Z$ is of cardinality strictly less than $\kappa$, is the unique saturated model of size $\kappa$ containing $Z$. 

If $X$ is the strongly minimal $\emptyset$-definable set $X$ associated to $\M$ with $X\cap \acl(\emptyset)$ infinite,  a well-known result of Lascar and Pillay yields that  every algebraically closed subset is an elementary substructure (which immediately gives Property \ref{H:closed_stat}). In particular, types over algebraically closed subsets will  be  stationary. The latter can always be achieved by working in $T^{eq}$, but we will prefer not to change our language: even if there is a natural isomorphism between the automorphism group of $\M$ and the automorphism group of $\M^{eq}$, it need not be the case that the corresponding groups $\Aut(\M/\acl(\emptyset))$ and $\Aut(\M^{eq}/\acl^{eq}\emptyset))$ are isomorphic.

\begin{prop}\label{P:asm_prop} 
	
	Every almost strongly minimal theory $T$ with respect to the strongly minimal $\emptyset$-definable set $X$ such that $X\cap \acl(\emptyset)$ is infinite satisfies Properties
	\ref{H:single_gen}-\ref{H:addgen} as well as (WH).
	
	Therefore, for every subset $\paraset$ of parameters of cardinality strictly less than $\kappa$,  every (saturated) model $\UU$ of size $\kappa$  is 
	$\kappa$-tame over $\paraset$. In particular, the conjugacy class in the group $\Aut(\UU/\acl(\paraset))$ of an automorphism  which moves $p_0$ maximally generates the whole group in four steps.   
\end{prop}
\begin{proof}
	Properties \ref{H:single_gen} and \ref{H:addgen}  follow from Remark \ref{R:exemples_base} as well as Proposition
	\ref{P:gen_reg_addgen} (\ref{I:prime_tame}) and (\ref{I:fte}). In order to show Property \ref{H:closed_stat} and (WH), we need only show that types over algebraically closed subsets are stationary by Lemma \ref{L:stable_Konnerth}. 
	
	We will show that for almost strongly minimal theories with $X\cap \acl(\emptyset)$ infinite, every algebraically closed subset $\acl(A)$ is an elementary substructure. This is probably well-known, but we could not find a suitable reference. It is a straight-forward application of Tarski's test: consider a realization $b$ of a formula $\varphi(x, c_1,\ldots, c_n)$, where each
	$c_i$ belongs to $\acl(A)$. Now, the element $b$ is algebraic over a sequence $d_1,\ldots, d_m$ of realizations of the strongly minimal set $X$. Up to relabelling, we may assume that the $d_i$'s are $A$-independent, so $b$ belongs to $\acl(A, d_1,\ldots, d_m)$. Choose $m$ least possible such there exists such a realization of $\varphi$. If $m=0$, then we are
	done. Otherwise, choose a formula $\psi(x, d_1,\ldots, d_m)$ with parameters in $A$ which holds for $b$ and such that $\psi(x, e_1,\ldots, e_m)$ is always algebraic for all $(e_1,\ldots, e_m)$ in $X^m$. 
	
	Since $X\cap \acl(\emptyset)$ is infinite, the generic type $p_0$ cannot be isolated. Thus, by the Open
	Mapping theorem, neither is the type $tp(d_m/\acl(A), d_1,\ldots, d_{m-1})$. The formula \[ X(y)\land \exists x\big(\varphi(x, c_1,\ldots,c_n)\land \psi(x, d_1, \ldots, d_{m-1}, y) \big) \] 
	must therefore contain a realization $e$ which is not generic, and thus must be algebraic, over $A, d_1,\ldots, d_{m-1}$,  giving the desired contradiction.   
	
	Therefore, using Remark \ref{R:Exist}, Proposition \ref{P:prime_tame} and Theorem \ref{T:main}, we deduce the desired conclusion of the statement. 
\end{proof}

Lascar showed in \cite[Proposition 14]{dL92} that, for a $1$-based strongly minimal set $\UU$, given a subset $\paraset$ of $\UU$ of cardinality strictly less than $\kappa$, the group of automorphisms $\Aut(\UU/\acl(\paraset))$ always contains (many) non-trivial bounded elements. However, this is not the case for the non-locally modular strongly minimal theory ACF$_p$ of algebraically closed fields of characteristic $p$, for $p$ either $0$ or a prime, see \cite[Lemma 2]{dL97} and \cite[Theor\`eme 3.1]{BHMP17}.

A straight-forward application of Remark \ref{R:unbounded_movemax}, together with Proposition \ref{P:asm_prop} and Corollary \ref{C:main}, yields the following: 
\begin{cor}\label{C:ACF}
	Every strongly minimal theory ACF$_p$ of algebraically closed fields of characteristic $p$, for $p$ either $0$ or a prime satisfies that there are no non-trivial bounded automorphisms fixing $\cl(\emptyset)$ pointwise, so Property \ref{H:nobounded} holds.
	
	Therefore, for every subset $Z$ of cardinality at most $\kappa$ inside an algebraically closed field $\UU$ of cardinality $\kappa$,  the group $\Aut(\UU/\acl(\paraset))$ is simple.
\end{cor} 

\subsection*{Differential fields}

We fix some natural number $m\ge 1$ and consider first the  theory DCF$_{0,m}$ of differentially closed fields of characteristic $0$ with
$m$ commuting derivations (see \cite{tMG00} for the general 
results as well as \cite[Chapter II]{MMP96} for $m=1$). The theory DCF$_{0,m}$ is complete,  eliminates
quantifiers and imaginaries and is  $\omega$-stable. The  generic type over $\emptyset$ is the type of a differentially transcendental element, that is, satisfying no non-trivial differential equation over $\Q$. Note that this type is stationary and its Lascar  rank equals $\omega^m$, so the generic type is regular. Since the underlying additive group is divisible, it is connected. By Corollary \ref{C:Premiers_Examples}, the theory DCF$_{0,m}$  satisfies Properties
\ref{H:single_gen}-\ref{H:addgen} as well as (WH). Moreover, if $A$ is a subset of a $\kappa$-saturated model of DCF$_{0,m}$, then $\cl(A)$ is the field of elements satisfying some non-trivial differential equation over the differential field generated by $A$. 

It was shown in \cite[Theorem 3.1]{BHMP17} that the only bounded automorphism of a $\kappa$-saturated model of DCF$_{0,m}$ is the identity. By Remark \ref{R:unbounded_movemax}, Property \ref{H:nobounded} holds for DCF$_{0,m}$. 

Another theory  of differential fields to consider was obtained by Hrushovski and Itai in \cite{eHI}.  The models of the theory $T(X)$ are the existentially
closed models $K$ of the class of differential fields of characteristic $0$ in which the equations
defining $X:=\{x\in C \mid Dx=s(x)\}$ have no
solution, where $C$ is a curve of genus at least $1$  and $s:C\to T(C)$ is a rational section of the tangent bundle, all defined over the field of constants of $K$. Each theory $T(X)$ is complete, eliminates quantifiers and imaginaries, and is $\omega$-stable \cite[Proposition 4.1 \& Lemma 4.6]{eHI}. As before, the generic type $p_0$ is the stationary type of a differentially
transcendental element, and has Lascar rank $\omega$. The underlying additive group is again connected and $\cl(A)$ is the field of elements satisfying some non-trivial differential equation over the differential field generated by $A$. In particular,  each theory $T(X)$ satisfies Properties
\ref{H:single_gen}-\ref{H:addgen} as well as (WH), by Corollary \ref{C:Premiers_Examples}.

The proof of  \cite[Theorem 3.1]{BHMP17} goes through verbatim for models of $T(X)$, so the only bounded automorphism of a $\kappa$-saturated model of $T(X)$ is the identity. Again by   Remark \ref{R:unbounded_movemax}, we have that Property \ref{H:nobounded} holds for $T(X)$.

As before, we deduce from Proposition \ref{P:prime_tame} and Corollary \ref{C:main} the following result:

\begin{prop}\label{P:DCF} Let $T$ be either the theory DCF$_{0,m}$ of differentially closed fields of characteristic $0$ with
	$m$ commuting derivations  or one of the theories $T(X)$ described above. For every subset $\paraset$ of parameters of cardinality strictly less than $\kappa$, every saturated model $\UU$ of size $\kappa$ of $T$ as well as every $\kappa$-prime model over $\acl(\paraset)$ is 
	$\kappa$-tame over $\paraset$.  For every $\kappa$-tame model $\UU$ over $\paraset$, the group $\Aut(\UU/\cl(\paraset))$ is simple, where 
	\begin{multline*} \cl(\paraset) = \{ x\in \UU \ | \ x \emph{ satisfies some  non-trivial differential equation}  \\ \emph{ over the differential field generated by } \paraset \}.\end{multline*} 
\end{prop} 

\subsection*{Difference fields of characteristic $0$}

Recall that ACFA$_0$ denotes the theory of existentially closed
difference fields $(K,\sigma)$ of characteristic $0$. Every model $\M$ of ACFA$_0$ is algebraically closed (as a field) and \emph{inversive} (that is, the endomorphism $\sigma$ is surjective). If $A\subset \UU$, then $\acl(A)$ is the smallest
algebraically closed inversive difference field containing $A$. The type $\tp(b/A)$ of a a tuple $b$ in $\M$ is entirely determined by the isomorphism type
over $A$ of $\acl(Ab)$ (see \cite[Corollary 1.5]{CH99}). Every
completion of ACFA$_0$ eliminates imaginaries (\cite[(1.10)]{CH99}) and is simple. Moreover, the    nonforking independence can be described in algebraic terms: Given two
inversive difference fields $A=\acl(A)$ and $B=\acl(B)$ with a common difference
subfield $C=\acl(C)$, we have that $A\ind_C B$ if and only if
$A$ and $B$ are algebraically independent over $C$ (see
\cite[Section 
(1.9)]{CH99}).

\begin{remark}\label{R:ACFA0}
	Every completion $T$ of ACFA$_0$ is simple and satisfies Properties
	\ref{H:single_gen}-\ref{H:nobounded}.  The closure operator can be described algebraically as \[ \cl(B)=\{a\in
	\UU \ | \ \mathrm{tr.deg}(a, \sigma(a), \sigma^2(a),\ldots/\acl(B))<\infty\}.\] 
\end{remark}
\begin{proof}
	For Property \ref{H:single_gen}, notice that there is a unique generic $1$-type $p_0$ (generic in the sense of the field structure), which says that its realization does not satisfy any non-trivial difference equation. This type is stationary (\cite[Proposition 2.10]{CH99}) and is the only $1$-type of Lascar rank $SU(p_0)=\omega$, so it is regular. By Remark \ref{R:exemples_base}, the theory ACFA$_0$ satisfies the hypothesis of Proposition \ref{P:gen_reg_addgen}, hence Property \ref{H:addgen} holds. Property \ref{H:closed_stat}  was already shown in
	\cite[Theorem 5.3 and Corollary B.11]{CH04}.  Remark \ref{R:unbounded_movemax} yields Property \ref{H:nobounded} using \cite[Theorem 3.1]{BHMP17}. 
	
	The description of the closure follows now directly from Proposition \ref{P:gen_reg_addgen} (\ref{I:gp_noniso}) and the above characterisation of the generic type. 
\end{proof} 

The theory ACFA$_0$ is unstable, so we do not know whether there are saturated models in their cardinality as suitable candidates for $\kappa$-tame models in order to apply the results of Section \ref{S:Lascar}. Now, given an uncountable cardinal $\kappa$, the existence and  uniqueness of a $\kappa$-prime model $\UU$ over 
$\cl_\UU(\paraset)$ was shown by the second author
\cite{zC23}: Choose a $\kappa$-saturated
model $\mathcal M$ of ACFA of characteristic $0$ containing a subset $\paraset$ of cardinality strictly less than $\kappa$. Then $\kappa$-prime
models over algebraically closed difference subfields $A$ of $\M$ containing $\cl_{\M}(\emptyset)$ exist and are unique up to isomorphism over $A$ (see  Theorem 3.17 in \cite{zC23}).  As in the stable case, the $\kappa$-prime model $\UU$ of ACFA$_0$ over 
a subset $A$ of $\M$ containing $\cl_{\M}(\emptyset)$
is characterised by being $\kappa$-saturated and
containing $A$,  $\kappa$-atomic 
over $A$ and containing no (non-constant) $A$-indiscernible sequence of
length     $\kappa^+$ (Theorem 3.14 in \cite{zC23}). In particular, every completion of the theory ACFA$_0$ satisfies the condition of Proposition \ref{P:prime_tame}.

Therefore, the $\kappa$-prime $\UU$ over $\cl_{\M}(\paraset)$ will be $\kappa$-tame over $\paraset$ once we show that Property (WH) holds for ACFA$_0$, by Proposition \ref{P:prime_tame}. For the sake of the presentation, we have decided to provide two alternative proofs of this, one   along the lines of the proof of Lemma \ref{L:stable_Konnerth}
and another one (see Remark \ref{R:proof_Zoe}) using the strength of the semi-minimal analysis of types in ACFA$_0$.

\begin{prop}\label{P:ACFA_Konnerth}
	Every completion $T$ of ACFA$_0$ satisfies Property (WH).
\end{prop}
\begin{proof}
	In order to show that the $\kappa$-saturated model $\M$ is $\kappa$-atomic over any subset $K$ of the form $K=\acl(\cl(A_1)\cup\cdots\cup\cl(A_n)\cup B)$, where all $A_i$'s and $B$ have size strictly less than $\kappa$, we need to show, as in the proof 
	of Lemma
	\ref{L:stable_Konnerth},  that  $\tp(c/E)\models \tp(c/E\cup\{\eta\})$ for every finite tuples $c$ of $\M$ and $\eta$ of $K$, with $E=\acl(A_1,\cdots, A_n, B, 
	D)$, 
	where  $D$ is a subset of
	$K=\acl(\cl(A_1)\cup\cdots\cup\cl(A_n)\cup
	B)$
	of cardinality  bounded by $|\LL|<\kappa$ such that $c\ind_{D} 
	K$. Note 		that $c\ind_E K$. Possibly at the
	cost of enlarging $\eta$, we may also assume that
	$\eta=(\eta_0, \eta_1, \ldots ,\eta_n)$ with $\eta_0$ field 
	algebraic
	over $E\cup\{ \eta_i\}_{1\le i\le n}$
	and $\eta_i$ in $\cl(A_i)$ for $1\le i \le n$. 
	
	By \cite[Corollary 1.13]{CH99}, the field  $(\M,\sigma^k)$,  is again
	a difference closed field for $k\ne 0$ in $\Z$. In particular, we will
	denote all throughout this proof by $\scl{A}_{\sigma^k}$, $\acl_{\sigma^k}(A)$, $\cl_{\sigma^k}(A)$ and
	$\tp_{\sigma^k}(d)$ the corresponding notions in the reduct  $\M[k]=(\M,\sigma^k)$.	
	\begin{claim*}
		If 
		$ \scl{E\cup\{c\}}_{\sigma^k}$ and 
		$                \scl{E\cup\{\eta'\}}_{\sigma^k} $  are algebraically
		independent over $E$ 
		for every  $k\ne 0$ in $\Z$ and every finite tuple $\eta'$ realizing
		the quantifier-free type $\mathrm{qftp}_{\sigma^k}(\eta)$, then the type
		$\tp_\sigma(c/E)$ implies $\tp_\sigma(c/E\cup\{\eta\})$. 
	\end{claim*}
	\begin{claimproof*}
		Assume that $c$, $\eta$ and $E$ are as in the statement. We can apply \cite[Proposition 4.9]{CH99} and obtain that $\tp_\sigma(c/E)$ and
		$\tp_\sigma(\eta/E)$ are superficially co-stable: for every $d\ind_E
		\eta$ realizing $\tp_\sigma(c/E)$,  setting
		$K=\acl_\sigma(E\cup\{\eta\})$ and considering  the unique extension $\sigma_F$ of $\sigma\restr{\acl_\sigma(E,d)}\otimes \sigma\restr{K}$ to the compositum field
		$F$ of $\acl_\sigma(E,d)$ and $K$, we have that $F$ has no proper finite Galois extension invariant under $\sigma_F$.
		
		Moreover,  setting $k=1$
		in our assumption, we deduce from the previous description of the
		non-forking independence in the simple theory ACFA that $c$ is independent of every realisation
		$\eta'$ in $\UU$ of $\tp_\sigma(\eta/E)$, or equivalently, that every
		realisation $d$ of $\tp_\sigma(c/E)$ is independent from  $\eta$ over
		$E$.
		
		A realization $d$ of $\tp_\sigma(c/E)$ yields an $E$-isomorphism of difference fields between  $\acl_\sigma(E\cup\{d\})$ and $\acl_\sigma(E\cup\{c\})$ mapping $d$ to $c$. We need to show that this isomorphism of difference fields extends to an $K$-isomorphism of difference fields between $\acl_\sigma(K\cup\{d\})$ and $\acl_\sigma(K\cup\{c\})$.
		
		By superficial co-stability (since every $d$ realizing $\tp_\sigma(c/E)$ is automatically independent from $K$ over $E$), there is a unique extension of the field automorphism
		$\sigma_F$ to the field algebraic closure of $F$, by \cite[Lemma 2.8]{CH99}.
		We conclude that there is an isomorphism of difference fields between the algebraic closure of $F$  and the
		difference field $\acl_\sigma(K\cup\{c\})$ over $K$ mapping $d$ to $c$, so $d\equiv_K c$ for every realization $d$ of $\tp_\sigma(c/E)$, as desired. 
	\end{claimproof*}
	By the above Claim, we need only show  that 
	for every
	integer $k$ in $\N$ and every $\eta'$ in $\UU$ realizing the
	quantifier-free type $\mathrm{qftp_{\sigma^k}(\eta/E)}$,
	the fields $\scl{E,c}_{\sigma^k}$ and 
	$\scl{E,\eta'}_{\sigma^k}$ are
	algebraically independent over $E$. Note that
	$\cl_\sigma(A_i)=\cl_{\sigma^k}(A_i)$, so $\eta_i$ belongs to
	$\cl_{\sigma^k}(A_i)$ for $1\le i\le n$. The  isomorphism of 
	difference
	fields in the structure $(\UU, \sigma^k)$ over $E$ maps
	the tuple $\eta$ to $\eta'$, with $\eta'=(\eta'_0,
	\eta'_1,\ldots,\eta'_n)$, where $\eta'_0$ is field algebraic over
	$E\cup\{ \eta'_i\}_{1\le i\le n}$ and each
	$\eta'_i$ lies in $\cl_{\sigma^k}(A_i)$, since each of these
	properties is quantifier-free definable in the language of difference
	rings. Hence, the whole tuple $\eta'$ is field algebraic over
	the field generated by $E\cup\bigcup_{1\le i\le n} 
	\cl_{\sigma}(A_i)
	\subseteq K$. The difference field $K$ is algebraically closed, so
	$\scl{E,\bar\eta'}_{\sigma^k}\subseteq K$.  Since $E$ was 
	chosen so
	that $c\ind_E K$, we deduce the desired algebraic independence between
	$\scl{E,c}_{\sigma^k}$ and $\scl{E,\eta'}_{\sigma^k}$ over 
	$E$. 
\end{proof}

\begin{remark}\label{R:proof_Zoe} We provide now an alternative proof that every completion $T$ of ACFA$_0$ satisfies Property (WH): Using the above notation, we want to show that $\tp_\sigma(c/K)$ is  $\kappa$-isolated. The proof is by induction on the Lascar
	rank $\SU(c/K)$ of $\tp(c/K)$,
	and we assume the property proved for any type of SU-rank
	$<\SU(c/K)$, over any set $K'$ of the same kind. If
	$\SU(c/K)=0$, there is nothing to prove. The proof
	distinguishes two cases: whether $\tp(c/K)$ is orthogonal to
	$\fix(\sigma)$, or whether it is not. 
	
	If $\tp(c/K)$ is orthogonal to $\fix(\sigma)$ then $\tp(c/K)$ is
	stationary, by \cite[Lemma 2]{Bu19}, and the proof given in Lemma \ref{L:stable_Konnerth} goes through verbatim.     
	
	Suppose now that 
	$\tp(c/K)$ is non-orthogonal to $\fix(\sigma)$. As
	$K$ contains $\fix(\sigma)(\M)$, every non-algebraic type
	over $K$ which is realised in $\M$ is weakly orthogonal to
	$\fix(\sigma)$. Hence, the difference field $\acl(Kc)$ has the same fixed field
	as $K$.
	
	The proof of \cite[Theorem 5.5]{CH99} for the case $q$
	non-orthogonal to $\sigma(x)=x$ (page 3049) gives that there is $b$ in $\acl(Kc)$ such
	that $\tp(b/K)$ is qf-internal to $\fix(\sigma)$. This
	means that there is some difference field $L$ containing $K$ and independent from $b$
	over $K$, such that $b$ belongs to $LF$.
	
	By \cite[Lemma 3.4]{zC23}, there is countable subset $D=\acl(D)$ of $K$ such that 
	$\tp(b/D)\vdash \tp(b/K)$,
	and we may impose that  $b$ belongs to $\acl(Dc)$. Thus $\tp(b/K)$ is
	$\kappa$-isolated. Our induction hypothesis applied to
	$\tp(c/\acl(Kb))$ yields that $\tp(c/\acl(Kb))$ is
	$\kappa$-isolated. We conclude that $\tp(b,c/K)$, and thus $\tp(c/K)$, is $\kappa$-isolated by 
	\cite[Remark 2.17 (1)]{zC23}. 
\end{remark}

We deduce from Remark \ref{R:ACFA0}, Propositions \ref{P:prime_tame} and \ref{P:ACFA_Konnerth} as well as  Corollary \ref{C:main} the following result:
\begin{prop}\label{P:ACFA}
	For every subset $\paraset$ of parameters of cardinality strictly less than $\kappa$, every $\kappa$-prime model over $\cl(\paraset)$ is 
	$\kappa$-tame over $\paraset$.  For every $\kappa$-tame model $\UU$ over $\paraset$, the group $\Aut(\UU/\cl(\paraset))$ is simple. 
\end{prop}

\subsection*{Proper pairs of algebraically
	closed fields}

The theory ACFP of proper pairs of algebraically
closed fields shares many traits of the theory DCF$_0$ (or rather DCF$_{0,1}$) of differentially closed fields of characteristic $0$, yet
it is somewhat simpler to describe. Most of the results mentioned here appear in \cite{jK64, Po83,  BPV03, MPZ20}. 

Every completion of ACFP in the language
$\LL_P=\mathcal{L}_\textrm{Rings}\cup\{P\}$, where $P$ denotes the
distinguished proper algebraically closed subfield $E$ of the model $K$ of ACFP  
is given by the characteristic of the field \cite{aR59}. The type of a subfield $k$ with
$k$ linearly disjoint from $E$ over $k\cap E$ (which we will denote by $k \ind^{ld}_{k\cap E} E$)  is uniquely determined by its quantifier-free
$\LL_P$-type, so ACFP has quantifier elimination after adding Delon's $\lambda$-functions \cite{fD12}, which play a similar role to the $\lambda$-functions for separably closed fields (see next subsection).  Given a tuple $a_0,\ldots, a_n$ of $K$, if  $a_1,\dotsc,a_n$ are
$E$-linearly dependent, then $\lambda_n^i(-;
a_1\dotsc,a_n)$ are equal to zero for $i=1,\ldots,n$. If $a_1,\dotsc,a_n$ are
$E$-linearly independent, but $a_0,a_1,\dotsc,a_n$ are not, then the  values of the $\lambda$-functions 
are uniquely determined by $a_0 = \sum\limits_{i=1}^n \lambda_n^i(a_0;
a_1\dotsc,a_n)\,a_i$. It follows from the above description of types that every completion of the theory ACFP is $\omega$-stable and that non-forking independence  can be characterised in terms of
two field independences \cite[Remark 7.2 and 
Proposition 7.3]{BPV03}:  Two $\LL_P$-definably closed
subfields $L_1$ and $L_2$ of a sufficiently saturated model $\M$ of ACFP  are
independent over a common $\LL_P$-definably closed subfield $k$ if and
only if \[ L_1\ind^{\rm ACF}_{k} L_2 \  \text{ and } \ L_1\ind^{\rm
	ACF}_{Ek} L_2,\] where $Ek$ denotes the subfield generated by
$E\cup k$ and $\ind^{\rm
	ACF} $ denotes independence in the sense of the reduct ACF.

\begin{remark}\label{R:ACFP}
	Every completion $T$ of ACFP is $\omega$-stable and satisfies Properties
	\ref{H:single_gen}-\ref{H:addgen} as well as (WH).  The closure operator can be described algebraically as $ \cl(A)= (EA)^{alg}$. 
\end{remark}
\begin{proof}
	For Property \ref{H:single_gen}, notice that there is a unique generic $1$-type $p_0$ (generic in the sense of the field structure) of (Morley and Lascar) rank $\omega$, whose realization is not a root of a non-trivial polynomial with coefficients in the subfield $E$. This type is stationary, by the above characterisation of independence in ACFP. Since $U(p_0)=\omega$, the type $p_0$ is regular. By Remark \ref{R:exemples_base}, the theory ACFP satisfies the hypothesis of Proposition \ref{P:gen_reg_addgen}, hence Property \ref{H:addgen} holds. Now, types over algebraically closed subsets in the theory ACFP are stationary, as shown by Bartnick \cite{cB24}, so we deduce  that Property \ref{H:closed_stat} and (WH) hold by Lemma \ref{L:stable_Konnerth}.
	
	The description of the closure follows now directly from Proposition \ref{P:gen_reg_addgen} (\ref{I:gp_noniso}) and the above characterisation of the generic type. 
\end{proof}

We will deduce now from  Remarks \ref{R:unbounded_movemax} and \ref{R:ACFP} as well as the following lemma that  every completion of ACFP satisfies Property \ref{H:nobounded}.
\begin{lemma}\label{L:bdd_pairs}
	Every bounded automorphism of a $\kappa$-saturated model $\M$ of the theory ACFP of pairs of algebraically closed field is the identity or a power of Frobenius (in positive characteristic). In particular, there is no non-trivial bounded automorphism fixing $\cl(\emptyset)=E$ and thus every completion of ACFP satisfies Property \ref{H:nobounded}.
\end{lemma}

\begin{proof}
	We will be concise, for the proof is a straight-forward adaptation of
	\cite[Theorem 3.1]{BHMP17}.  Assume that the automorphism $\rho$ of the
	$\kappa$-saturated pair $(\UU, E)$ is bounded over the $\rho$-invariant subset $A$. By
	the description of the $\cl$-closure, we know that $\rho(b)$ is algebraic over the subfield $EA(b)$ for every element $b$ of $\M$, in particular it is so for every generic element $b$ over $A$. 
	
	Choose now two generic independent elements $b_1$ and $b_2$ over
	$A$. By the description of the independence, the elements $b_1$, $b_2$ and  $b_1+b_2$ are pairwise algebraically independent over
	$EA$, and thus so are the pairs $(b_1, \rho(b_1))$, $(b_2,
	\rho(b_2))$ and $(b_1+b_2, \rho(b_1+b_2))$. Analogously, the pairs the pairs $(b_1, \rho(b_1))$, $(b_2,
	\rho(b_2))$ and $(b_1\cdot b_2, \rho(b_1\cdot b_2))$ are algebraically independent over $EA$, since $b_1$ and $b_2$ are also independent multiplicative generics.

	By Ziegler's lemma (\cite[Theorem 1, see also Lemma 2]{mZ06}, or
	\cite[Lemme 2.5]{BHMP17}), we
	deduce that there are two connected  algebraic subgroup $H_1$ of ${\mathbb G}_a^2$ and $H_2$ of $\mathbb G_m^2$, each defined over $(EA)^{alg}$,  such that $(b_1,\rho(b_1))$ is a generic element  of an additive translate of  $H_1$ and of a multiplicative translate of $H_2$, both translates defined over   $(EA)^{alg}$.  From here on, the rest of the proof of  \cite[Theorem 3.1]{BHMP17} goes verbatim and yields that $\rho$ is the identity or a power of Frobenius (in positive characteristic), as desired. 
\end{proof}
Remark \ref{R:ACFP} and Lemma \ref{L:bdd_pairs} together with Proposition \ref{P:asm_prop} and Corollary \ref{C:main}, together with 
Propositions  \ref{P:prime_tame} and \ref{P:asm_prop} as well as Corollary \ref{C:main} yield the following result.

\begin{cor}\label{C:ACFP}
	For every subset $\paraset$ of parameters of cardinality strictly less than $\kappa$, every saturated model $\UU$ of size $\kappa$ of ACFP as well as every $\kappa$-prime model over $\cl(\paraset)=(E\paraset)^{alg}$ is  $\kappa$-tame over $\paraset$.  
	
	For every $\kappa$-tame model $\UU$ of ACFP over $\paraset$, the group $\Aut(\UU/(E\paraset)^{alg})$ is simple.
\end{cor} 
The reader familiar with Lascar's original proof for the complex numbers will immediately notice that Corollary \ref{C:ACFP} follows from his proof, since an $\LL_P-$automorphism of $\UU$ fixing $E=P^{\UU}$ pointwise is just a field automorphism.

\subsection*{Separably closed fields of infinite degree 
	of imperfection}

Most of the references for this section appear  in
\cite{fD88} and \cite{Sr86}, unless explicitly stated.  From now on, we work inside an ambient field of positive characteristic $p>0$. 

\begin{definition}\label{D:SCF}~
	\begin{enumerate}
		\item Given two subfields $k\subset K$, the extension $k\subset K$ is \emph{separable} if $k\ind^{ld}_{k^p} K^p$. Given a subring $R$ of $K$, it is immediate to see that $k=\mathrm{Quot(R)}\subset K$ is separable if and only if every tuple of elements of $R$ which is $R^p$-linearly independent remains so over $K^p$.

		\item A subset $A$ of $K$ is \emph{$p$-independent (in $K$)} if for every $a$ in $A$, we have that $a$ does not belong to $K^p[A\setminus \{a\}]$. If $k\subset K$ is separable, a subset $A$ of $k$ is $p$-independent in $K$ if and only if it is $p$-independent in $k$. 
		
		\item  A subset $A$ of $K$ is a $p$-basis if it is maximal $p$-independent, or equivalently, if $A$ is $p$-independent and $K=K^p[A]$. More generally, given a separable extension $k\subset K$, a subset $B$ of $K$ is \emph{$p$-independent over $k$} if $A\cup B$ is $p$-independent in $K$, where $A$ is a $p$-basis of $k$. This is equivalent to requiring that no $b$ in $B$ belongs to $K^p[k\cup B\setminus \{b\}]$. 
	\end{enumerate}
\end{definition}
The \emph{degree of imperfection} of the field $K$ is the unique element $e$ of $\N\cup\{\infty\}$ with $[K:K^p]=p^e$. In particular, a field has infinite imperfection degree if and only if $K$ contains an infinite $p$-independent subset. 

The completions of the theory of separably closed fields of positive characteristic $p$ are uniquely determined by the imperfection degree \cite{yE68}. We will denote the corresponding completion by $SCF_{p,e}$ (notice that $SCF_{p,0}$ is the theory ACF$_p$). Ershov (see also Wood \cite{cW79}) provides a description of types: if $k$ and $k'$ are isomorphic subfields of a model $K$ of $SCF_{p,e}$ with both $k\subseteq K$ and $k'\subseteq K$ separable, then $k$ and $k'$ have the same type. In particular, the theory $SCF_{p,e}$ is stable \cite[Theorem 
3]{cW79}. It follows implicitly from the above description of types that if $k\subseteq K$ is separable, then $k$ is definably closed and its algebraic closure coincides with $k^{sep}$, the separable closure of $k$.

The above description of types yields an algebraic characterization of non-forking independence: Given two definably closed subsets $A$ and $B$ of a model $K$ of $SCF_{p,e}$ with $C=A\cap B$ algebraically closed, we have that \[ A\ind_C B \text{ if and only if } A\ind^{ACF}_C B \text{ and } AB\ind^{ld}_{A^p B^p} K^p,\] so $AB\subseteq K$ is separable, where $AB$ (resp. $A^pB^p$) denotes the field generated by $A\cup B$ (resp. $A^p\cup B^p$).

There is a simple way to expand the language in order to obtain quantifier elimination for $SCF_{p,e}$ after adding function symbols for the
$\lambda$-functions \cite[Remark 7]{Sr86}, defined analogously as in the case of pairs of algebraically closed fields (historically the $\lambda$-functions were first introduced for $SCF_{p,e}$). Given a tuple $a_0,\ldots, a_n$ of a model $K$ of $SCF_{p,e}$, if  $a_1,\dotsc,a_n$ are
$K^p$-linearly dependent, then $\lambda_n^i(x;
a_1\dotsc,a_n)$ is always zero. If $a_1,\dotsc,a_n$ are
$K^p$-linearly independent, but $a_0,a_1,\dotsc,a_n$ are not, the  values of the $\lambda$-functions 
are uniquely determined by $a_0 = \sum\limits_{i=1}^n \lambda_n^i(a_0;
a_1\dotsc,a_n)^p\,a_i$. Notice that every definably closed subset $A$ of $K$ is a subfield and the corresponding extension $A\subseteq K$ is separable. In particular, given a subset $A$ of a model $K$ of $SCF_{p,e}$, the definable closure $\dcl(A)$ is the smallest subfield of $K$ containing $A$ and closed under the $\lambda$-functions \cite[Lemma 0]{Sr86}. Moreover, the algebraic closure $\acl(A)$ is $\dcl(A)^{sep}$.

\begin{remark}\label{R:SCFP}
	The theory SCF$_{p,\infty}$ of separably closed fields of characteristic $p>0$ and infinite imperfection degree is stable and satisfies Properties
	\ref{H:single_gen} and \ref{H:closed_stat} as well as (WH).
\end{remark}
\begin{proof}
	We work inside a sufficiently saturated model $K$ of SCF$_{p,\infty}$. For Property \ref{H:single_gen}, notice that for any definably closed subfield $k$ and any element $g$ which is $p$-independent over $k$, the field extension $k(g)\subset K$ is again separable. The above characterization of types yields that any two $p$-independent $g$ and $g'$ elements over  $k$ have the same type over $k$. 
	We denote by $p_0$ a generic type of the (additive group of the) field $K$. If a realisation $g$ of $p_0$ were not $p$-independent over $\mathbb F_p$, then $g$ would be contained in $K^p$. Now, a generic type in a stable group only contains formulae which are generic (or \emph{syndetic}), so the subgroup $K^p$ would have finite index, which is a contradiction.  Hence $p_0$ is the unique generic type and the same argument yields that $p_0$ is stationary, since any two generic elements $g$ and $g'$ over the definably closed subfield $k$ are $p$-independent over $k$ and hence have the same type by the previous discussion. Remark \ref{R:exemples_base} now yields Property \ref{H:single_gen}. Moreover, an element $g$ realizes the non-forking extension of $p_0$ over $k$ if and only if $g$ is $p$-independent over $k$.  
	
	Now, types over algebraically closed subsets in the theory SCF$_{p,\infty}$ are stationary, as shown by Bartnick \cite{cB24}, so we deduce  that Property \ref{H:closed_stat} and (WH) hold by Lemma \ref{L:stable_Konnerth}.
\end{proof} 
\begin{remark}\label{R:SCF_noProp3}
	It is easy to see that the generic type $p_0$ of the theory SCF$_{p,\infty}$ not regular: Choose two independent realizations $g$ and $h$ of $p_0$. By Remark \ref{R:SCFP}, the element $g^p + h$ is again $p$-independent over the prime field, so it realizes $p_0$. Moreover, the type $\tp(g/\mathbb{F}_p(h))$ is a non-forking extension of $p_0$, whilst the type $\tp(g^p+h/\mathbb{F}_p(h))$ is a forking extension, since the additive subgroup $K^p$ does not have finite index in $K$. Now, the elements $g$ and $g^p+h$ are dependent over $\mathbb{F}_p(h)$, since $g$ is $p$-dependent over $\mathbb{F}_p(h, g^p+h)$.  
	
	The same proof works for the (unique) generic additive type of the field in any degree of imperfection $e\ne 0$.
\end{remark}

\begin{lemma}\label{L:SCF_closure}
	Given a subset $A=\acl(A)$ of a $\kappa$-saturated model $K$ of SCF$_{p,\infty}$,  the closure
	(with respect to $p_0$) is
	\[\cl(A)=\{ b \in K \mid \acl(Ab) \text{ has the same
		$p$-basis as } A \} 
	=\bigcap_{m\in \N} K^{p^m}[A].\]
	In particular, if $B$ is a $p$-basis of $K$ over $k$, then $\cl(k\cup B)=K$, so  SCF$_{p,\infty}$ satisfies Property \ref{H:addgen}. 
\end{lemma}

\begin{proof}
	Clearly, if $b$ belongs to $\cl(A)$, then every generic element $g$ over $A$ remains so over $\cl(A)$, and thus over $\acl(Ab)$, by Remark \ref{R:closure} (b). In particular, there is no  realisation of $p_0$ in $\acl(A\cup\{ b\})$ generic over $A$, so no realization of $p_0$ in $\acl(A\cup\{ b\})$ is $p$-independent over the field generated by $A$. Thus, the subfield $\acl(Ab)$ must have the same $p$-basis as $A$. 
	
	If $b$ is such that $\acl(Ab)$ has the same $p$-basis $B$ as $A=\acl(A)$,  every iterated $\lambda$-function of $b$ with respect to $B$ belongs to $\acl(A,b)\subset K^p[B]$, so $b$ belongs to $K^{p^m}[A]$ for every $m$ in $\N$ and hence $b$ lies in $\bigcap_{m\in \N} K^{p^m}[A]$. 
	
	For the last equality, suppose now that $b$ belongs to 
	$\bigcap_{m\in \N} K^{p^m}[A]$. In order to show that $b$ belongs to $\cl(A)$,  choose some $D\supset A$ and a generic element $g$ over $D$. By (the proof of) \cite[Lemma 4]{Sr86} we have that $\dcl(D\cup\{b\})$ is a subset of $\bigcap_{m\in \N} K^{p^m}[\acl(D)]$. If $g$ divides with $\dcl(D\cup\{b\})$ over $D$, then $g$ is not $p$-independent over $\dcl(D\cup\{b\})$, so \[ g\in K^p[\dcl(D\cup\{b\})] \subset K^p[\acl(D)], \] which is a contradiction since $g$ is generic over $D$. 
\end{proof} 

\begin{lemma}\label{L:bdd_scf}
	Every non-trivial automorphism of a $\kappa$-saturated model $K$ of the theory SCFP$_{p,\infty}$ moves $p_0$ maximally. In particular, the theory SCFP$_{p,\infty}$ satisfies  Property \ref{H:nobounded}. 
\end{lemma}
\begin{proof}
	We will be concise, for the proof is a straight-forward adaptation of
	\cite[Theorem 3.1]{BHMP17}.  Assume that the automorphism $\rho$ of the
	$\kappa$-saturated model $K$ does not move $p_0$ maximally so there is a 
	$\rho$-invariant subset $A=\acl(A)$ such that for every generic element $b$ over $A$, we have that $\rho(b)$ and $b$ are not independent over $A$. As $b$ is generic over $A$, the field $A(b)$ is separable and thus has some $p$-basis contained in $A\cup\{b\}$. Now, the element  $\rho(b)$ is not $p$-independent over $A(b)$, so $\rho(b)$ belongs to $K^p[A(b)]$. Hence, we deduce that $\rho(b)$ belongs to $L[b]$, where $L=K^p(A)$.

	Choose now two independent generic elements $b$ and $c$ over $A$. Our assumption implies that
	\[ \rho(b)\in L[b] \ , \ \rho(c)\in L[c] \ \& \ \rho(b+c) \in
	L[b+c].\]  Write
	$\rho(b)=\sum_{i=0}^{p-1}d_ib^i$, $\rho(c)=\sum_{i=0}^{p-1}e_ic^i$ and
	$\rho(b+c)=\sum_{i=0}^{p-1}f_i(b+c)^i$. We deduce that 
	\begin{align*} \sum_{i=0}^{p-1}d_ib^i+e_ic^i &=\rho(b)+\rho(c)= \rho(b+c) 
		=\sum_{i=0}^{p-1}f_i(b+c)^i= \\
		&=\sum_{i=0}^{p-1}f_i\sum_{j=0}^i {\binom{i}{j}}b^ic^{i-j}.
	\end{align*}
	Note that all $p$-monomials in $b$ and $c$ which appear in the last sum are linearly independent over $L$, for $b$ and $c$ are independent generic elements. In particular, we have that  $f_2=\ldots=f_{p-1}=0$, so 
	$f_0+f_1(b+c)=\rho(b+c)=(d_0+e_0)+d_1a+e_1b$ and thus \[ d_0+e_0=f_0 \text{ and } d_1=e_1=f_1,\] with $d_i=e_i=0$ for $i\geq 2$. For the generic element $b\cdot c$ over $A$ write  
	$\rho(b\cdot c)=\sum_{i=0}^{p-1}h_i(b\cdot c)^i$ with $h_i$ in $A$.  We deduce from the above that 
	\[\rho(b\cdot c)=(d_0+d_1b)(e_0+e_1 c)=d_0e_0+d_1e_0b+e_0d_1c+d_1e_1 b\cdot c,\] so we deduce that $h_i=0$ for $i\ge 2$ and \[ h_0=d_0e_0 \ ,\  h_1=d_1e_1 \ \text{ as well as } \  d_0e_1=d_1e_0=0.\] 
	Now, since $A$ is $\rho$-invariant, the generic element $\rho(b\cdot c)$ cannot lie in $L$, so $h_1\ne 0$. Hence $d_1\ne 0$, so $e_0=0$ and thus $h_0$ must be $0$, which means that for every pair $(b, c)$ of generic independent elements over $A$,  \[ \frac{\rho(b\cdot c)}{b\cdot c}=h_1 \in L.\] 
	Every generic element over $A$ can be written as a product of two generic independent elements over $A$, so $\rho(x)=\lambda_x x$ for every generic element $x$ over $A$ where $\lambda_x$ belongs to $L$. This yields the desired conclusion, since $x+1$ is also generic over $A$ and \[ \lambda_{x+1}x+ \lambda_{x+1}1=\lambda_{x+1}(x+1)=\rho(x+1)=\rho(x)+1=\lambda_x x +1,\] so $1=\lambda_{x+1}=\lambda_x$. Whence $\rho$ is the identity on $K$, for every element is a sum of two generic elements.
\end{proof}

Remark \ref{R:SCFP}, Lemmata \ref{L:SCF_closure} and \ref{L:bdd_scf} together with Lemma \ref{L:sat_tame}, Proposition \ref{P:prime_tame} and Corollary \ref{C:main} yield the following result, under the additional assumption that ${\rm cof}(\kappa)\geq \aleph_1$, since SCFP$_{p,\infty}$  is stable yet not superstable (see Remark \ref{R:Exist}).

\begin{cor}\label{C:SCF}
	For every subset $\paraset$ of parameters of cardinality strictly less than $\kappa$, every saturated model $\UU$ of size $\kappa$ of SCFP$_{p,\infty}$ as well as every $\kappa$-prime model over $\cl(\paraset)=\bigcap_{m\in \N} \UU^{p^m}[\paraset]$ are  $\kappa$-tame over $\paraset$ (Saturated models of SCFP$_{p,\infty}$ exist whenever $k\ge 2^{\aleph_0}$ whilst the existence and uniqueness of $\kappa$-prime models holds if ${\rm cof}(\kappa)\geq \aleph_1$). 
	
	For every $\kappa$-tame model $\UU$ of SCFP over $\paraset$, the group $\Aut(\UU/\cl(\paraset))$ is simple.
\end{cor}

Corollaries \ref{C:ACF},  \ref{C:ACFP} and \ref{C:SCF} as well as Propositions \ref{P:DCF} and \ref{P:ACFA} yield now the following result: 

\begin{theorem}\label{T:mainExamples}
	For each of the following countable theories of fields with operators:
	\begin{itemize}
		\item algebraically closed fields with the closure operator given by the
		field algebraic closure;
		\item differentially closed fields in characteristic $0$ with finitely
		many commuting  derivations with the closure operator given by the
		elements which are not differentially transcendental;
		\item  differential fields in characteristic $0$, maximal
		with the property of omitting a given strictly minimal
		type $X$; same closure as above;
		\item difference  closed fields  in characteristic $0$ with the closure
		operator given by the elements of transformal transcendence degree $0$;
		
		\item  proper pairs of algebraically
		closed fields $(K,E)$ with the closure operator \[ \cl(A)= {
			E(A)^\mathrm{alg}};\]  
		\item separably closed fields $K$ in characteristic $p$ and
		infinite imperfection degree with the closure
		operator \[\cl(A)=\bigcap\limits_{n\in\N} K^{p^n}[\acl(A)].\]            
	\end{itemize}
	The group of automorphisms of every uncountable model saturated in its
	uncountable cardinality $\kappa$ (if such models exist) fixing pointwise $\cl(\paraset)$ is simple, where 
	$\paraset$ is a subset of parameters of size strictly less than $\kappa$.

	More generally, for any of the above theories, given an uncountable cardinal
	$\kappa$ (with ${\rm cof}(\kappa)\geq \aleph_1$  in the last example) and a $\kappa$-prime model $\UU$ over $\cl(\paraset)$, where $\paraset$ is a subset of   size strictly less than $\kappa$,  the automorphism group 
	$\Aut(\UU/\cl(\paraset))$ is simple.\qed
\end{theorem}

We will finish this article  with some questions we did not attempt to solve, and propose a list of possible examples for further research. 

\begin{question}\label{Q:examples} The theory of separably closed 
	fields of
	positive characteristic $p$ and finite degree of imperfection $e$  eliminates imaginaries 
	\cite[Proposition 
	43]{fD88} in the language $
	\LL_\textrm{Rings}\cup\{c_1,\ldots, c_e\} 
	\cup\{\lambda_n(x)\}_{0\le n <
		p^e}$,  where $\{c_1,\ldots, c_e\}$ denotes a $p$-basis and the
	$\lambda$-functions are taken with respect to the monomials in that basis. This
	theory satisfies
	Properties \ref{H:single_gen}, \ref{H:closed_stat} and (WH)
	with respect to the closure operator given by 
	the
	unique generic type. We do not know whether this theory satisfies
	Property \ref{H:addgen}. By Remark \ref{R:SCF_noProp3}, the unique generic type is not regular if $e\ne 0$. 
	
	Providing an explicit algebraic description of the closure $\cl(A)$ in the finite degree of imperfection, even if we see such fields as fields equipped with (iterative) Hasse derivations \cite{mZ03}, seems difficult, so we do not know whether the Property \ref{H:addgen} holds. 
\end{question}

We have not attempted to list other natural examples of fields for which our methods could apply such as differentially closed fields of positive characteristic \cite{cW73}, DCFA \cite{Bu19}, ACFE$_0$ \cite{CH04} or $\mathcal D$-closed fields of characteristic $0$ equipped with $n$~free derivations \cite{MS14}.

\end{document}